\theoremstyle{thmit} 
\newtheorem{thm}{Theorem}[section]
\newtheorem{lem}[thm]{Lemma}
\newtheorem{prop}[thm]{Proposition}
\theoremstyle{thmrm} 
\newtheorem*{oldproof}{Proof}
\renewenvironment{proof}[1][{}]{\begin{oldproof}[#1]}{\qed\end{oldproof}}
\theoremstyle{definition}
\newtheorem{remark}[thm]{Remark}
\newtheorem{example}[thm]{Example}
\newtheorem{assumption}[thm]{Assumption}
\numberwithin{equation}{section}
\newcommand{\abs}[1]{\lvert#1\rvert}
\DeclareMathOperator*{\Res}{Res}
\def\n{\mathbb{N}}
\def\nn{\mathbb{N}_{0}}
\def\p{{\mathbb{P}}}
\def\r{{\mathbb{R}}}
\def\c{{\mathbb{C}}}
\def\od{{\rm ord}}
\def\la{{\Lambda}}
\begin{document} 

\title{Double Dirichlet series associated with arithmetic functions}
\author{Kohji Matsumoto}
\address{K. Matsumoto\\ Graduate School of Mathematics, \\
Nagoya University, \\
Chikusa-ku, Nagoya 464-8602, Japan}
\email{kohjimat@math.nagoya-u.ac.jp}
\author{Akihiko Nawashiro}
\address{A. Nawashiro\\ Dai Nippon Printing Co., Ltd., \\
1-1-1, Ichigaya-Kagacho, \\
Shinjuku-ku, Tokyo \\
162-8001, Japan}
\email{nawashiro-a@mail.dnp.co.jp}
\author{Hirofumi Tsumura}
\address{H. Tsumura\\ Department of Mathematical Sciences, \\
Tokyo Metropolitan University,\\
1-1, Minami-Ohsawa, Hachiouji, Tokyo \\
192-0397, Japan}
\email{tsumura@tmu.ac.jp}

\subjclass[2000]{11M41, 11M06, 11M26}

\keywords{Multiple Dirichlet series; Riemann zeta function; von Mangoldt function; M{\"o}bius function}

\thanks{This work was supported by Japan Society for the Promotion of Science, Grant-in-Aid for Scientific Research No. 25287002 (K. Matsumoto) and No. 15K04788 (H. Tsumura).}

\maketitle

\begin{abstract}
We consider double Dirichlet series associated with arithmetic functions such as the von Mangoldt function, the M\"obius function, and so on. We show analytic continuations of them by use of the Mellin-Barnes integral, and determine the location of singularities. Furthermore we observe their reverse values at non-positive integer points. 
\end{abstract}

\bigskip
\section{Introduction}\label{sec-1}

Let $\n$ be the set of natural numbers, $\nn :=\n \cup \{ 0\}$, $\p$ the set of prime numbers, 
$\r$ the field of real numbers, $\c$ the field of complex numbers and $i:=\sqrt {-1}$.

For any arithmetic function $\alpha\,:\n \to \c$, let
\begin{equation}\label{def:DS}
\Phi (s;\alpha )=\sum _{n=1}^\infty \frac {\alpha (n)}{n^s}
\end{equation}
be the Dirichlet series associated with $\alpha$. 
It is important to show the analytic continuation of $\Phi (s;\alpha )$ for various $\alpha$'s. 
As its multiple version, we consider the multiple Dirichlet series
\begin{align}\label{def:mDS}
\Phi _r &(s_1,\cdots ,s_r;\alpha _1,\cdots ,\alpha _r)\\
&=\sum _{m_1=1}^\infty \sum _{m_2=1}^\infty \cdots \sum _{m_r=1}^\infty \frac {\alpha _1 (m_1)\alpha _2 (m_2)\cdots \alpha _r(m_r)}{m_1^{s_1}(m_1+m_2)^{s_2}\cdots (m_1+\cdots +m_r)^{s_r}},\nonumber
\end{align}
where $\alpha_k$ ($1\leq k\leq r$) are arithmetic functions. The typical example of \eqref{def:mDS} is the multiple zeta function of Euler-Zagier type defined by
\begin{equation}
\zeta _r (s_1,\cdots ,s_r)=\sum _{m_1=1}^\infty \sum _{m_2=1}^\infty \cdots \sum _{m_r=1}^\infty m_1^{-s_1}(m_1+m_2)^{-s_2}\cdots (m_1+\cdots +m_r)^{-s_r},  \label{EZ}
\end{equation}
which has been studied extensively in these two decades. In fact, 
the meromorphic continuation of \eqref{EZ} to the whole space $\c ^r$ has been proved by Akiyama, Egami and Tanigawa \cite{AET}, Zhao \cite{Zhao}, the first-named author \cite{Matsumoto-Nagoya,Matsumoto-JNT}, and so on.
In particular, the method in \cite{Matsumoto-Nagoya,Matsumoto-JNT} is to make use of the Mellin-Barnes integral formula (see, for example, \cite[Section 14.51, p.289, Corollary]{WW}):
\begin{equation}\label{MB}
(1+\lambda )^{-s}=\frac 1{2\pi i}\int _{(c)}\frac {\Gamma (s+z)\Gamma (-z)}{\Gamma (s)}\lambda ^{z}dz,
\end{equation}
where $s,\lambda \in \c$ with $\lambda \neq 0,\abs {{\rm arg}\lambda}<\pi,\ \Re s>0$, $c\in \r$ with $-\Re s<c<0$, and the path of integration is the vertical line form $c-i\infty$ to $c+i \infty$. This method is based on Katsurada's results \cite{Katsu1, Katsu2}. 

By applying this method, Tanigawa and the first-named author proved the following general result.
(We also mention a paper of de la Bret{\`e}che \cite{dlB} who treated similar multiple
series.)
Let $\mathcal{A}$ be the set of arithmetic functions satisfying the following three 
conditions: If $\alpha\in\mathcal{A}$, then
\begin{itemize}
\item[(I)] $\Phi (s;\alpha)$ is absolutely convergent for $\Re s>\delta=\delta(\alpha)(>0)$;
\item[(II)] $\Phi (s;\alpha)$ can be continued meromorphically to the whole plane $\c$, holomorphic except for a possible pole (of order at most 1) at $s=\delta$; 
\item[(III)] in any fixed strip $\sigma _1\leq \sigma \leq \sigma _2$, $\Phi (\sigma +it;\alpha)=O(\abs t^A)$ holds as $\abs t\to \infty$, where $A$ is a non-negative constant.
\end{itemize}

\begin{thm}[Matsumoto-Tanigawa{\cite[Theorem 1]{MatsumotoTanigawa}}]\label{MT-03} 
If arithmetic functions $\alpha_1,\ldots,\alpha_r$ belong to $\mathcal{A}$, then
$\Phi _r (s_1,\cdots ,s_r;\alpha _1,\cdots ,\alpha _r)$ can be continued meromorphically to the whole space $\c ^r$, and location of its possible singularities can be described explicitly. In particular, if all $\Phi (s;\alpha_k)$'s are entire, then $\Phi _r (s_1,\cdots ,s_r;\alpha _1,\cdots ,\alpha _r)$ is also entire.
\end{thm}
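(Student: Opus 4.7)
The plan is to argue by induction on $r$, using the Mellin--Barnes formula \eqref{MB} to reduce $\Phi_r$ to the $(r-1)$-fold case. The base case $r=1$ is exactly hypothesis (II). For the inductive step, working in the region of absolute convergence, I would factor
$$m_1+\cdots+m_r=(m_1+\cdots+m_{r-1})(1+\lambda),\qquad \lambda:=\frac{m_r}{m_1+\cdots+m_{r-1}},$$
apply \eqref{MB} to $(1+\lambda)^{-s_r}$, and interchange sum and integral (valid on the line $\Re z=c$ with $-\Re s_r<c<0$ by absolute convergence). This yields the fundamental identity
\begin{align*}
\Phi_r(s_1,\ldots,s_r;\alpha_1,\ldots,\alpha_r)
&=\frac{1}{2\pi i}\int_{(c)}\frac{\Gamma(s_r+z)\Gamma(-z)}{\Gamma(s_r)}\\
&\quad\times\Phi_{r-1}(s_1,\ldots,s_{r-2},s_{r-1}-z;\alpha_1,\ldots,\alpha_{r-1})\,\Phi(s_r+z;\alpha_r)\,dz,
\end{align*}
in which both Dirichlet factors are meromorphic in $z$ with explicitly known polar loci by the inductive hypothesis and (II).

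The second step is to shift the contour rightward from $\Re z=c$ to $\Re z=M-\varepsilon$ for a large positive integer $M$, collecting residues at the poles crossed. These fall into three families: the simple poles $z=0,1,\ldots,M-1$ of $\Gamma(-z)$, whose residues are holomorphic specializations of the two Dirichlet factors and contribute to the regular part; the pole $z=\delta(\alpha_r)-s_r$ of $\Phi(s_r+z;\alpha_r)$, which contributes a hyperplane singularity to $\Phi_r$; and the translated inherited poles of $\Phi_{r-1}$, which produce additional explicit hyperplane singularities. The remaining integral along $\Re z=M-\varepsilon$ is holomorphic in $(s_1,\ldots,s_r)$ on the open set where the new contour avoids all poles; its convergence is controlled by Stirling's formula applied to the gamma factors, which yields exponential decay $e^{-\pi|\Im z|}$ as $|\Im z|\to\infty$, combined with the polynomial growth bound from (III) applied to $\Phi_{r-1}$ and $\Phi(s_r+z;\alpha_r)$ on vertical lines. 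Letting $M\to\infty$ extends the continuation to all of $\c^r$, and the description of the polar locus is read off directly from the collected residues. If every $\Phi(s;\alpha_k)$ is entire, then by induction no Dirichlet factor contributes a pole to the integrand in $z$; the residues at $z=0,1,2,\ldots$ are entire in $(s_1,\ldots,s_r)$, so $\Phi_r$ itself is entire.

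The principal obstacle is the combined bookkeeping of the contour shift and the tail estimate: for $(s_1,\ldots,s_r)$ in any prescribed compact set, one must choose $M$ and $\varepsilon$ so that the shifted line $\Re z=M-\varepsilon$ misses every pole of the integrand simultaneously, and the polynomial-in-$|\Im z|$ bound from (III) must be uniform in $(s_1,\ldots,s_r)$ on such compacta so that the remaining integral converges absolutely and defines a holomorphic function there. Condition (III) is used precisely to make this tail estimate pass; together with the exponential decay from the gamma factors, it also justifies both the original sum-integral interchange and its reformulation on the shifted contour. Once this analytic bookkeeping is carried out, the explicit description of the singular hypersurfaces drops out directly from which residues have been collected at each stage of the induction.
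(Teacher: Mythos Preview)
This theorem is not proved in the present paper; it is quoted from \cite{MatsumotoTanigawa}. Your outline---induction on $r$ via the Mellin--Barnes formula \eqref{MB} followed by a rightward contour shift---is precisely the method of that reference, and the present paper carries it out in full detail for $r=2$ in the proofs of Theorems~\ref{state1} and~\ref{Th-4-2}. So the strategy is correct and matches the intended one.

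There is, however, a computational slip in your fundamental identity that propagates into the later bookkeeping. With $\lambda=m_r/(m_1+\cdots+m_{r-1})$, the Mellin--Barnes factor $\lambda^z$ contributes $m_r^{z}$ and $(m_1+\cdots+m_{r-1})^{-z}$; combined with the prefactor $(m_1+\cdots+m_{r-1})^{-s_r}$ from the factorization, the correct identity is
\[
\Phi_r=\frac{1}{2\pi i}\int_{(c)}\frac{\Gamma(s_r+z)\Gamma(-z)}{\Gamma(s_r)}\,
\Phi_{r-1}(s_1,\ldots,s_{r-2},s_{r-1}+s_r+z;\alpha_1,\ldots,\alpha_{r-1})\,\Phi(-z;\alpha_r)\,dz,
\]
not the version you wrote with last argument $s_{r-1}-z$ and factor $\Phi(s_r+z;\alpha_r)$; compare \eqref{continuation}. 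The distinction matters: with the correct formula, shifting $\Re z$ to the right pushes the last argument of $\Phi_{r-1}$ \emph{further into} its region of absolute convergence, so no poles of $\Phi_{r-1}$ are crossed by the contour. The only residues picked up are at $z=-\delta(\alpha_r)$ (from $\Phi(-z;\alpha_r)$) and at $z=0,1,2,\ldots$ (from $\Gamma(-z)$). The inherited singular hyperplanes of $\Phi_{r-1}$ then enter not as crossed poles but through the residue terms $\binom{-s_r}{k}\Phi_{r-1}(s_1,\ldots,s_{r-1}+s_r+k)\Phi(-k;\alpha_r)$, viewed as meromorphic functions of $(s_1,\ldots,s_r)$. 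Once this is corrected, the rest of your argument---Stirling decay of the gamma factors beating the polynomial growth supplied by (III), and the entire case when all $\Phi(s;\alpha_k)$ are entire---goes through as you describe.
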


\ 
They applied this result to multiple Dirichlet $L$-functions of Euler-Zagier type and multiple automorphic $L$-functions. 

On the other hand, as an example which is outside of Theorem \ref{MT-03}, Egami and the first-named author \cite{MatsumotoEgami} considered the double series associated with the von Mangoldt function. Let $\zeta(s)$ be the Riemann zeta function and denote by 
$\{\rho_{n}\}_{n\geq 1}$ the non-trivial zeros of $\zeta(s)$ numbered by the size of absolute values of their imaginary parts. 
Let 
$\Lambda$ be the von Mangoldt function 
defined by
\begin{eqnarray*}
\Lambda (n)=\left\{ \begin{array}{ll}
\log p & (n=p^m\ \text{\rm for}\ p\in \mathbb{P},\ m\in \mathbb{N}) \\
0 & ({\rm otherwise}) \\
\end{array} \right. .
\end{eqnarray*}
Then it is well-known that
\begin{equation*}
\Phi (s;\la )=\sum _{n=1}^\infty \frac {\la (n)}{n^s}=-\frac {\zeta '(s)}{\zeta (s)}
\end{equation*} 
(see \cite[$\S$ 1.1]{Titch}). We denote this function by $M(s)$. 
We see that $M(s)$ has poles at $s=1$, $s=-2m\ (m\in \mathbb{N})$ and $s=\rho_l$ $(l\in \mathbb{N})$, hence does not satisfy the assumption (II).    That is, 
$\Lambda\notin\mathcal{A}$.
As a double version of $M(s)$, they considered 
\begin{eqnarray*}
{\mathcal M}_2(s)&=&\sum _{m_1=1}^\infty \sum _{m_2=1}^\infty \frac {\Lambda (m_1)\Lambda (m_2)}{(m_1+m_2)^s}\ (=\Phi _2(0,s;\Lambda ,\Lambda ))
\end{eqnarray*}
which can be written as $\sum _{n=1}^\infty G_2(n)n^{-s}$,
where
\begin{equation*}
G_2(n)=\sum _{m_1+m_2=n}\Lambda (m_1)\Lambda (m_2).
\end{equation*}
It should be emphasized that ${\mathcal M}_2(s)$ is closely connected to the famous Goldbach conjecture which implies that $G_2(n)>0$ for all even $n\geq 4$. 
The study of ${\mathcal M}_2(s)$ will be useful to understand 
the behaviour of $G_2(n)$. However, we cannot apply Theorem \ref{MT-03} to ${\mathcal M}_2(s)=\Phi _2(0,s;\Lambda ,\Lambda )$. 
 
They also showed that the line $\Re s=1$ is the natural boundary of ${\mathcal M}_2(s)$ under some plausible assumptions, hence ${\mathcal M}_2(s)$ cannot be continued meromorphically to the whole complex plane $\c$ (see \cite[Theorem 2.2]{MatsumotoEgami}).

In the present paper we first consider another type of double version of $M(s)$ defined by 
\begin{equation}\label{def:phi(1,l)}
\Phi _2 (s_1,s_2;1,\Lambda )=\sum _{m_1=1}^\infty \sum _{m_2=1}^\infty \frac {\Lambda (m_2)}{m_1^{s_1} (m_1+m_2)^{s_2}}.
\end{equation}
The right-hand side of (\ref{def:phi(1,l)}) is absolutely convergent for $\Re s_2>1,\Re (s_1+s_2)>2$.
However, similar to ${\mathcal M}_2(s)$, we cannot apply Theorem \ref{MT-03} to $\Phi _2 (s_1,s_2;1,\Lambda )$. The first main aim of this paper is to prove that $\Phi _2(s_1,s_2;1,\la )$ can be continued meromorphically to the whole space $\c ^2$ (see Theorem \ref{state1}), and location of its singularities can be described explicitly (see Theorem \ref{C-2-1}), 

We here remark that 
\begin{equation}
\Phi _2 (0,s;1,\Lambda )=\sum_{n=1}^\infty \frac{\psi(n-1)}{n^s}, \label{Cheby}
\end{equation}
where $\psi(x)$ is the Chebyshev $\psi$-function defined by 
$\psi(x)=\sum_{1\leq n\leq x}\Lambda(n)$ (see \cite[Chap.\,7,\,\S 2]{Stein}). 
It is well-known that the prime number theorem comes from $\psi(x)\sim x$ $(x\to \infty)$, and also from $\widehat{\psi}(x)\sim \frac{1}{2}x^2$ $(x\to \infty)$, where
$$\widehat{\psi}(x)=\sum_{2\leq n\leq x}\psi(n-1)\left(=\int_{1}^{x-1} \psi(u)du\right)$$
(see \cite[Chap.\,7,\,Propositions 2.1 and 2.2]{Stein}). 
Note that applying Perron's formula to \eqref{Cheby}, we have 
$$\widehat{\psi}(x)=\frac{1}{2\pi i}\int_{c-iT}^{c+iT}\ \Phi_2(0,s;1,\Lambda)\frac{x^s}{s}ds+(\text{error term})\quad (c>2).$$
From this viewpoint as well, it seems important to study $\Phi_2(s_1,s_2;1,\Lambda)$.

Secondly, for any $\Phi (s;\alpha)$ with $\alpha\in\mathcal{A}$, 
we define $\widetilde{\alpha}:\,\n \to \c$ by
\begin{equation}\label{def_tilde_alpha}
\frac{\Phi (s;\alpha)}{\zeta(s)}=\sum _{n=1}^\infty \frac {\widetilde{\alpha}(n)}{n^s}\left(=:\Phi(s;\widetilde{\alpha})\right), 
\end{equation}
and consider the double series
\begin{equation}\label{def_double_tilde}
\Phi _2 (s_1,s_2;1,\widetilde{\alpha} )=\sum _{m_1=1}^\infty \sum _{m_2=1}^\infty \frac {\widetilde{\alpha} (m_2)}{m_1^{s_1} (m_1+m_2)^{s_2}}.
\end{equation}
The second main aim of this paper is to prove that $\Phi _2(s_1,s_2;1,\widetilde{\alpha}  )$ can be continued meromorphically to the whole space $\c ^2$ (see Theorem \ref{Th-4-2}). Note that we can apply this result to the cases when $\widetilde{\alpha}$ is the M\"obius function $\mu$, the Euler totient function $\phi$, and so on (see Section \ref{sec-6}). 
We further calculate reverse values of $\Phi _2 (s_1,s_2;1,\Lambda )$ at points on the sets of singularity (see Propositions \ref{spv1} and \ref{spv2} and Example \ref{Example-4-1}), and also those of $\Phi _2 (s_1,s_2;1,\mu )$ (see Example \ref{Exam-4-3}).

If $\widetilde{\alpha}$ is an arithmetic function for which $\Phi(s;\widetilde{\alpha})$
has only finitely many poles,  then the double series of the form \eqref{def_double_tilde}
has been studied by Choie and the first-named author \cite{C-M}.   However 
$\Phi(s;\widetilde{\alpha})$
defined by \eqref{def_tilde_alpha} obviously has infinitely many poles, so is outside of
the study in \cite{C-M}.

Also, since $\zeta'(s)$ has a double pole at $s=1$, it does not satisfy the assumption (II).
Therefore we will consider the case of the von Mangoldt function separately in Section
\ref{sec-2}.    The other reason of this separate treatment is that we need not assume
Assumption \ref{Ass-2} in Section \ref{sec-2}.

The authors believe that it is not difficult to extend the results in the present paper
to the case when $\Phi(s;\widetilde{\alpha})$ has a pole of higher order at $s=\delta$,
or even to the case when $\Phi(s;\widetilde{\alpha})$ has other finitely many poles.

\ 

\section{The double series $ \Phi _2 (s_1,s_2;1,\Lambda )$}\label{sec-2}

In this section, we consider analytic properties of $\Phi _2 (s_1,s_2;1,\Lambda )$. We first define $a_k$ and $b_l$ by 
\begin{align*}
& a_k=\frac 1{2\pi i}\int _{|\xi +k|=\frac 1 2} \frac {M(\xi )}{\xi +k} d\xi\quad (k\in \n;\ k:\text{even}), \\ 
& b_l=\frac 1{2\pi i}\int _{|\xi +l|=\frac 1 2} \frac {\Gamma (\xi )}{\xi +l} d\xi\qquad (l\in \n_0), 
\end{align*}
namely $a_k$ and $b_l$ are constant terms of Laurent series of $M(s)=-\zeta'(s)/\zeta(s)$ at $s=-k$ and of $\Gamma(s)$ at $s=-l$, respectively. We can also express that
\begin{align}
a_k&=\lim_{s\to -k}\ \frac{d}{ds}(s+k)M(s)=-\frac{\zeta''(-k)}{2\zeta'(-k)}\quad (\text{$k$:even}),\label{def-ak}\\
b_l&=\lim_{s\to -l}\ \frac{d}{ds}(s+l)\Gamma(s)=\frac{(-1)^l}{l!}\left(\sum_{j=1}^{l}\frac{1}{j}-\gamma\right), \label{def-bk}
\end{align}
where $\gamma$ is the Euler constant. 

\begin{thm}\label{state1}$\Phi _2(s_1,s_2;1,\la )$ can be continued meromorphically to the
whole space $\c ^2$ by the following expression:
\begin{align}\label{mainthm}
&\Phi _2 (s_1,s_2;1,\Lambda ) \\
&=\frac {\zeta (s_1 +s_2 -1)}{s_2 -1}-(\log 2\pi ) \zeta (s_1+s_2)\nonumber\\
&+\sum ^{N-1}_{k=1 \atop k:{\rm odd}} \binom {-s_2}k M(-k)\zeta (s_1+s_2+k)\nonumber \\
&-\sum ^{N-1}_{k=1 \atop k:{\rm even}} \bigg[ \binom {-s_2}k\left\{ \left( -{a_k}+k!b_k\right)\zeta (s_1+s_2+k)-\zeta'(s_1+s_2+k)\right\} \nonumber \\
& \qquad\qquad -\frac {1}{k!}\frac{\Gamma'(s_2+k)}{\Gamma(s_2)}\zeta(s_1+s_2+k)\bigg]\nonumber\\
&-\frac{1}{\Gamma (s_2)}\sum _{n=1}^\infty \od(\rho _n)\Gamma (s_2-\rho _n)\Gamma (\rho _n)\zeta (s_1+s_2-\rho _n)\nonumber  \\
&+\frac{1}{2\pi i\Gamma (s_2)}\int_{(N-\varepsilon )}\Gamma (s_2+z)\Gamma (-z)M(-z)\zeta (s_1+s_2+z)dz,\nonumber 
\end{align}
where $N\in \mathbb N$, 
$\varepsilon$ is a small positive number, and $\od(\rho _n)$ is the order of $\rho _n$ as a zero of the Riemann zeta function.
\end{thm}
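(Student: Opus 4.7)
The plan is to apply the Mellin-Barnes formula (\ref{MB}) with $s=s_2$ and $\lambda=m_2/m_1$ to the factor $(m_1+m_2)^{-s_2}=m_1^{-s_2}(1+m_2/m_1)^{-s_2}$, working first in the region of absolute convergence $\Re s_2>1,\,\Re(s_1+s_2)>2$. Choosing a real $c$ with $-\Re s_2<c<-1$ and interchanging summation with integration (justified by absolute convergence), the inner $m_1$-sum produces $\zeta(s_1+s_2+z)$ and the $m_2$-sum produces $M(-z)$, yielding the starting representation
\begin{equation*}
\Phi_2(s_1,s_2;1,\Lambda)=\frac{1}{2\pi i\,\Gamma(s_2)}\int_{(c)}\Gamma(s_2+z)\Gamma(-z)M(-z)\zeta(s_1+s_2+z)\,dz.
\end{equation*}

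I would then shift the contour from $(c)$ to $(N-\varepsilon)$ and collect the residues at all poles of the integrand in the strip $c<\Re z<N-\varepsilon$, each weighted by $-1/\Gamma(s_2)$ coming from the orientation of the Cauchy formula for a rightward shift. The poles are: the simple pole of $M(-z)$ at $z=-1$, which gives $\zeta(s_1+s_2-1)/(s_2-1)$; the simple pole of $\Gamma(-z)$ at $z=0$, where $M(0)=-\log 2\pi$ produces the $(\log 2\pi)\zeta(s_1+s_2)$ term; for each odd $k$ with $1\leq k\leq N-1$, the simple pole of $\Gamma(-z)$ at $z=k$ with $M(-k)$ finite, producing the odd part of the sum via the identity $\Gamma(s_2+k)/\Gamma(s_2)=(-1)^k k!\binom{-s_2}{k}$; for each even $k$ with $2\leq k\leq N-1$, a \emph{double} pole coming from the simultaneous simple poles of $\Gamma(-z)$ and of $M(-z)$ (the latter caused by the trivial zero $\zeta(-k)=0$), whose Laurent expansions bring in precisely the constants $a_k,b_k$ of \eqref{def-ak} and \eqref{def-bk} and whose residue yields both the $\zeta$-term and the $\zeta'$-term in the even part of the sum; and, for every non-trivial zero $\rho_n$, the simple pole of $M(-z)$ at $z=-\rho_n$, whose residue equals $\mathrm{ord}(\rho_n)$, since $M=-\zeta'/\zeta$ has only simple poles at zeros of $\zeta$ regardless of their multiplicity. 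Matching each residue against the corresponding term of \eqref{mainthm} is then a direct, if slightly tedious, computation.

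The main technical obstacle will be the rigorous justification of the contour shift across the infinite family of non-trivial zeros. I plan to carry this out in the classical way via a sequence of truncated shifts through horizontal rectangles with vertical sides at $\Re z=c$ and $\Re z=N-\varepsilon$ and horizontal sides at $\Im z=\pm T_j$, where $T_j\to\infty$ is chosen, by standard density arguments for the zeros of $\zeta$, so that $|\zeta(\sigma+iT_j)|^{-1}$ (and hence also $|M(-z)|$) is uniformly polynomially bounded on these horizontal sides. Combined with Stirling's formula, which forces the two $\Gamma$-factors in the integrand to decay exponentially in $|\Im z|$, this ensures that the horizontal pieces of each rectangle vanish in the limit, that the series $\sum_n\mathrm{ord}(\rho_n)\Gamma(s_2-\rho_n)\Gamma(\rho_n)\zeta(s_1+s_2-\rho_n)$ converges absolutely and locally uniformly on $\c^2$ away from its explicit poles, and that the remaining integral along $(N-\varepsilon)$ is absolutely convergent and holomorphic in $(s_1,s_2)$ whenever $\Re(s_1+s_2)>1-(N-\varepsilon)$. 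Since $N\in\n$ is arbitrary, this provides meromorphic continuation of $\Phi_2(s_1,s_2;1,\Lambda)$ to all of $\c^2$ together with the explicit formula \eqref{mainthm}.
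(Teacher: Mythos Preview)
Your proposal is correct and follows essentially the same route as the paper: apply the Mellin--Barnes formula to obtain the integral representation, shift the vertical line from $(c)$ to $(N-\varepsilon)$ through rectangles whose heights $T_j$ are chosen to avoid the ordinates of non-trivial zeros (so that $M(-z)$ is controlled on the horizontal edges), collect the residues at $z=-1$, $z=0$, $z=k$ (odd: simple; even: double), and $z=-\rho_n$, and then argue that the remaining integral and the series over zeros define meromorphic functions on overlapping domains covering $\c^2$. Two small points to tighten: your lower bound on $c$ should also include $c>1-\Re(s_1+s_2)$ (needed so that $\sum m^{-s_1-s_2-z}$ converges absolutely when you interchange sum and integral), and the holomorphy region for the shifted integral requires $\Re s_2>-(N-\varepsilon)$ in addition to $\Re(s_1+s_2)>1-(N-\varepsilon)$, to keep the poles of $\Gamma(s_2+z)$ off the line of integration.
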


\begin{proof}
We first assume $\Re s_2>1,\Re (s_1+s_2)>2$ and ${\rm max}\{ -\Re s_2 ,1-\Re (s_1+ s_2 ) \} <c<-1$. Then we have 
\begin{eqnarray}\label{tr:phi(1,l)}
\Phi _2 (s_1,s_2;1,\Lambda )&=&\sum _{m=1}^\infty \sum _{n=1}^\infty \frac {\Lambda (n)}{m^{s_1} (m+n)^{s_2}} \\
&=&\sum _{m=1}^\infty \sum _{n=1}^\infty \frac {\Lambda (n)}{m^{s_1+s_2}} \left( 1+\frac nm\right) ^{-s_2}.\nonumber
\end{eqnarray}

We apply the Mellin-Barnes formula (\ref{MB}) with $s=s_2,\lambda ={n}/{m}$ to (\ref{tr:phi(1,l)}) to obtain
\begin{align}\label{continuation}
\Phi _2 (s_1,s_2;1,&\Lambda )=\sum _{m=1} ^\infty \sum _{n=1}^\infty \frac {\Lambda (n)}{m^{s_1+s_2}} \frac 1 {2\pi i}\int _{(c)} \frac{\Gamma (s_2 +z)\Gamma (-z)}{\Gamma (s_2)} \left( \frac nm\right) ^{z}dz\\
&=\frac 1 {2\pi i}\int _{(c)} \frac{\Gamma (s_2 +z)\Gamma (-z)}{\Gamma (s_2)} \sum _{n=1}^\infty \la (n)n^{z}\sum _{m=1} ^\infty m^{-s_1-s_2-z} dz\nonumber \\
&=\frac 1 {2\pi i}\int _{(c)} \frac{\Gamma (s_2 +z)\Gamma (-z)}{\Gamma (s_2)} M(-z)\zeta (s_1+s_2+z)dz. \nonumber 
\end{align}
The exchange between the summation and the integration is valid because of the choice of $c$.

Now, by the same argument as in \cite[Section 3]{MatsumotoEgami}, we shift the path $(c)$ to ($N-\varepsilon$) for an arbitrarily large $N>0$. In order to check the validity of this shifting, we prepare the known order estimations of $\Gamma (s)$, $\zeta (s)$ and $M(s)$ as follows.   First, we recall that
\begin{equation}\label{ST}
\Gamma (\sigma +it)=O(|t|^{ \sigma -\frac {1}{2}}e^{-\frac {\pi }{2}|t|}),
\end{equation}
\begin{equation}\label{order:zeta}
\zeta (\sigma +it)=O(|t|^A)\quad \ (A:{\rm positive\ constant})
\end{equation}
hold uniformly in fixed vertical strips (see \cite[$\S$ 12.3]{WW} and \cite[p.95 (5.1.1)]{Titch}). 
Next we can find an arbitrarily large $t_0>0$ such that
\begin{equation}
|t_0-\Im(\rho_n)|\gg (\log t_0)^{-1} \label{EM-3-5}
\end{equation}
for any $n\in \mathbb{N}$ (see \cite[(3.5)]{MatsumotoEgami}). Then it holds that
\begin{equation}
M(\sigma+it_0) \ll (\log t_0)^2\quad (-1\leq \sigma \leq 2) \label{EM-3-7}
\end{equation}
(see \cite[(3.7)]{MatsumotoEgami} or Ingham \cite[Theorem 26]{Ingham}). Furthermore it holds that
\begin{equation}
M(\sigma+it) \ll \log (2|\sigma+it|)\quad (\sigma<-1, t>0), \label{Ivic}
\end{equation}
provided that discs of radii $1/2$ around the trivial zeros $s=-2m$ $(m\in \mathbb{N})$ of $\zeta(s)$ are excluded (see Ivi\'c \cite[(12.22)]{Ivic} or \cite[Theorem 27]{Ingham}). Using these results, we see that the above shifting is possible by the same argument as in \cite[Section 3]{MatsumotoEgami}.

In the  course of this shifting, we encounter the poles of the integrand which are derived from $\Gamma (-z)$ and $M(-z)$. All poles of $\Gamma (-z)$ and $M(-z)$ are simple and the residues at those poles are 
\begin{center}
$\displaystyle{\Res_{z=-1}}\ M(-z)=-1$, 
\end{center}
\begin{center}
$\displaystyle{\Res_{z=-\rho _n}}M(-z)=\od(\rho _n)$,
\end{center}
\begin{center}
$\displaystyle{\Res_{z=k}}\ M(-z)=1\ \ (1\leq k\leq N-1,\ k:{\rm even})$
\end{center}and
\begin{equation*}
\displaystyle{\Res_{z=k}}\ \Gamma (-z)=\frac {(-1)^{k-1}}{k!} \ \ \ \ (0\leq k\leq N-1).
\end{equation*}
Therefore the integrand has 
simple poles at $z=-1, z=0, z=\rho_n$ ($n\in\mathbb{N}$), $z=k$
$(1\leq k\leq N-1,\ k:{\rm odd})$, and has
double poles at $z=k$ $(2\leq k\leq N-1,\ k:{\rm even})$ whose residues are
\begin{align*}
&\frac {1}{\Gamma (s_2)}\bigg\{ \Gamma (s_2+k)\left( \frac {(-1)^{k-1}}{k!} a_k +b_k\right) \zeta (s_1+s_2+k)\\
& \qquad\quad +\frac{(-1)^{k-1}}{k!}\left(\Gamma(s_2+k)\zeta'(s_1+s_2+k)+\Gamma'(s_2+k)\zeta(s_1+s_2+k)\right)\bigg\}\\
&=\binom {-s_2}k\left\{\left( -{a_k}+k!b_k\right)\zeta (s_1+s_2+k)-\zeta'(s_1+s_2+k)\right\}\\
& \qquad\quad  -\frac{1}{k!}\frac{\Gamma'(s_2+k)}{\Gamma(s_2)}\zeta(s_1+s_2+k),
\end{align*}
hence we obtain \eqref{mainthm}. The first, the second, the third and the fourth terms on the right-hand side of (\ref{mainthm}) are 
obviously meromorphic on the whole space $\c ^2$.

The fifth term is convergent absolutely for all $s_1,s_2\in \c$ except for its singularities $s_1+s_2=1+\rho _n$ and $s_2=\rho _n-j$ ($j\in \n _0$) because of the order estimations (\ref{ST}), (\ref{order:zeta}) and
\begin{equation*}
\od(\rho _n)=O(\log \abs{\rho _n}),
\end{equation*}
which can be easily obtained from 
\begin{equation*}
N(T+1)-N(T)=O(\log T),
\end{equation*}
where $N(T)$ is the number of non-trivial zeros (counted with multiplicity) of $\zeta (\sigma +it)$ in the region $-T\leq t\leq T$ (see \cite[Chapter 9, p.214 (9.4.3)]{Titch}).

The integral on the right-hand side of (\ref{mainthm}) can be analytically continued to the region 
\begin{equation}
\mathcal{D}_N=\{ (s_1,s_2)\in \mathbb C ^2|\ \Re s_2>-N+\varepsilon,\Re (s_1+s_2)>1-N+\varepsilon \}, \label{Domain}
\end{equation}
because in this region the poles of the integrand are not on the path of integration. Since $N$ is arbitrary, (\ref{mainthm}) gives the analytic continuation of $\Phi _2 (s_1,s_2;1,\Lambda )$ to the whole space $\c ^2$. Thus we complete the proof of Theorem \ref{state1}.
\end{proof}

From Theorem \ref{state1}, we can prove the following.

\begin{thm}\label{C-2-1}\ 
The singularities of $\Phi _2 (s_1,s_2;1,\Lambda )$ are located 
only on the subsets of $\c^2$ defined by one of the following equations:
\begin{align}
& s_2=1,  \label{sing-1}\\
& s_2=-l\quad (l\in \mathbb{N},\ l\geq 2),\label{sing-2}\\
& s_1+s_2=2-l\quad (l\in \mathbb{N}_0),\label{sing-3}\\
& s_2=-l+\rho_n\quad (n\in \mathbb{N},\ l\in \mathbb{N}_0),\label{sing-4}\\
&s_1+ s_2=1+\rho_n\quad (n\in \mathbb{N}),\label{sing-5}
\end{align}
all of which are ``true'' singularities. 
\end{thm}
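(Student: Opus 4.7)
The plan is to extract the singular locus directly from the explicit meromorphic continuation \eqref{mainthm} provided by Theorem~\ref{state1}. Since the integral on the last line of \eqref{mainthm} is holomorphic on $\mathcal{D}_N$ (see \eqref{Domain}) and $N$ can be taken arbitrarily large, any singularity of $\Phi_2(s_1,s_2;1,\Lambda)$ must come from one of the six explicit terms preceding it. I would first read off those potential poles, match the resulting hyperplanes to \eqref{sing-1}--\eqref{sing-5}, and then verify non-removability by computing the residue along each hyperplane and exhibiting a point where it does not vanish.

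The enumeration is largely bookkeeping. The prefactor $\zeta(s_1+s_2-1)/(s_2-1)$ yields \eqref{sing-1} and the $l=0$ case of \eqref{sing-3}, and $-(\log 2\pi)\zeta(s_1+s_2)$ yields the $l=1$ case of \eqref{sing-3}. In the odd-$k$ sum the factor $\zeta(s_1+s_2+k)$ gives $s_1+s_2=1-k$, i.e.\ the even $l\ge 2$ cases of \eqref{sing-3}. In the even-$k$ sum, $\zeta(s_1+s_2+k)$ and $\zeta'(s_1+s_2+k)$ produce the odd $l\ge 3$ cases of \eqref{sing-3}, while $\Gamma'(s_2+k)/\Gamma(s_2)$---the double pole of $\Gamma'$ at $s_2+k\in -\mathbb{N}_0$ being reduced to a simple pole by the simple zero of $1/\Gamma(s_2)$---contributes \eqref{sing-2}. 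Finally, $\Gamma(s_2-\rho_n)$ and $\zeta(s_1+s_2-\rho_n)$ in the sum over $\rho_n$ give \eqref{sing-4} and \eqref{sing-5}; because $\rho_n\notin \mathbb{Z}$, these poles are not absorbed by the zeros of $1/\Gamma(s_2)$ or by the integer poles of $\zeta$.

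Non-removability would then be checked hyperplane by hyperplane. For \eqref{sing-1}, \eqref{sing-4} and \eqref{sing-5} only a single explicit term contributes and the residue is manifestly a non-trivial function of the remaining variable. For \eqref{sing-2}, a Laurent expansion of $\Gamma'(s_2+k)/\Gamma(s_2)$ at $s_2=-l$ reduces the residue to $-\sum_{k \text{ even},\, 2\le k\le l}\binom{l}{k}\zeta(s_1-l+k)$, which is visibly non-trivial in $s_1$. For \eqref{sing-3} with odd $l\ge 3$, the $\zeta'(s_1+s_2+k)$ piece of the even-$k=l-1$ summand in fact produces a double pole with leading Laurent coefficient $-\binom{-s_2}{k}$, again non-trivial.

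The main obstacle is \eqref{sing-3} with even $l\ge 2$, where only the odd-$k=l-1$ summand of the third sum contributes and the residue reads $\binom{-s_2}{l-1}\,M(-(l-1))$. Non-removability here reduces to $M(-k)\ne 0$ for every odd $k\ge 1$. Via the functional equation of $\zeta$ this is equivalent to the non-vanishing of $-\log(2\pi)+\psi(k+1)+\zeta'(k+1)/\zeta(k+1)$. For $k=1$ this is classical, and for odd $k\ge 3$ I would separate into a large-$k$ range where $\psi(k+1)\sim \log k$ dominates the bounded contribution of $\zeta'(k+1)/\zeta(k+1)$, and a finite small-$k$ range handled by direct numerical estimates on these standard functions. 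Together with the previous steps, this establishes \eqref{sing-1}--\eqref{sing-5} as precisely the true singular set of $\Phi_2(s_1,s_2;1,\Lambda)$.
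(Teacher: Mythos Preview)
Your approach is essentially the paper's: both read off the candidate hyperplanes from the explicit formula \eqref{mainthm} and then check residues term-by-term to rule out removability. The paper packages the ``no cancellation between terms'' step a bit more cleanly by passing to the variables $(u,s_2)$ with $u=s_1+s_2$ (after Akiyama--Egami--Tanigawa), so that the families \eqref{sing-1}--\eqref{sing-5} become visibly distinct coordinate-type hyperplanes; your case-by-case treatment achieves the same thing. On one point you are in fact more careful than the paper: for the even $l\ge 2$ cases of \eqref{sing-3} the paper merely lists $u=1-k$ ($k$ odd) as arising from the odd-$k$ sum and invokes non-cancellation, without ever checking that the residue factor $M(-k)$ is nonzero. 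Your reduction via the functional equation to $-\log(2\pi)+\psi(k+1)+\zeta'(k+1)/\zeta(k+1)\ne 0$, handled by the asymptotic $\psi(k+1)\sim\log k$ for large $k$ and a finite numerical check for small odd $k$, is exactly what is needed to close that gap.
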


\begin{proof}
From \eqref{mainthm} it is easy to see that possible singularities of
$\Phi _2 (s_1,s_2;1,\Lambda )$ are located only on the hyperplanes defined by the
above list.

We prove that all of those are true singularities. 
Fix an arbitrary $N\in \mathbb{N}$ and consider \eqref{mainthm} on $\mathcal{D}_N$ defined by \eqref{Domain}. 
We put $s_1+s_2 = u$, and regard \eqref{mainthm} as a formula in variables $u,s_2$. 
This idea of ``changing variables'' is originally due to Akiyama, Egami and Tanigawa \cite{AET}. For simplicity, we write the right-hand side of \eqref{mainthm} as 
$$X_1-X_2+\sum_{k=1 \atop k:\text{odd}}^{N-1}Y_{1,k} -\sum_{k=1 \atop k:\text{even}}^{N-1}Y_{2,k} -\sum_{n=1}^\infty Z_n +\text{(integral part)}.$$
We can see that the integral part is absolutely convergent for any $u,s_2\in \mathcal{D}_N$, namely is holomorphic. 
We have possible singularities $u=2$ and $s_2=1$ arising from $X_1$, $u=1$ from $X_2$, $u=1-k$ ($k$:\,odd) from $Y_{1,k}$.
 
Next consider $Y_{2,k}$.    The possible singularities coming from this term are
$u=1-k$ ($k$:\,even) and $s_2=-k-m$ ($k$:\,even, $m\in\mathbb{N}_0$).
The Laurent series of $Y_{2,k}$ around $u=1-k$ has the principal part of order 2 
because of the term $\zeta'(u+k)$, hence this is indeed a candidate of singularity.
Also, since
\begin{equation}
\frac{\Gamma'(s+k)}{\Gamma(s)}=\frac{(-1)^{k-1}(k+m)!}{m!}\frac{1}{s+k+m}+O(1)\quad (s \to -k-m) \label{Gamma-property}
\end{equation}
for even $k\geq 2$ and $m\in \mathbb{N}_0$, the principal part of the Laurent series of $Y_{2,k}$ around $s_2=-l$ $(l\in \mathbb{N},\ l\geq 2)$ is 
$$\sum_{k+m=l \atop \text{$k$:\,even}}\frac{\zeta(u+k)}{k!}
\frac{(-1)^{k-1}l!}{m!}\frac{1}{s_2+l}=
\sum_{k=2 \atop \text{$k$:\,even}}^{l}\,(-1)^{k-1}\binom{l}{k}\zeta(u+k)\frac{1}{s_2+l},$$
whose coefficient does not vanish identically as a function in $u$.   In fact, when $l$ is even, by setting $u=-l$, the coefficient equals to 
$$\sum_{k=2 \atop \text{$k$:\,even}}^{l}\,(-1)^{k-1}\binom{l}{k}
\zeta(-l+k)=(-1)^{l-1}\zeta(0) \neq 0,$$
because negative even integers are trivial zeros of the Riemann zeta-function.
When $l$ is odd, by setting $u=-l+1$, the coefficient equals to 
$$(-1)^l\binom{l}{l-1}\zeta(0) \neq 0.$$
Therefore $s_2=-l$ $(l\in \mathbb{N},\ l\geq 2)$ are candidates of singularities. 
From $Z_n$, we have possible singularities $s_2=-l+\rho_n$ $(l\in \mathbb{N},\ l\geq 2)$. Consequently we obtain the list \eqref{sing-1}-\eqref{sing-5},
and all of which cannot be cancelled 
each other. Therefore all of them are true singularities. 
\end{proof}

\begin{remark}\label{Rem-2}\ 
Akiyama, Egami and Tanigawa observed that sets of singularities of $\zeta_r(s_1,\ldots,s_r)$ include many points of indeterminacy, like $(s_1,s_2)=(0,0)$ of the function $s_1/(s_1+s_2)$ (see \cite[Section 3]{AET}). At those points, the values of $\zeta_r(s_1,\ldots,s_r)$ can be determined only as a limit value, depending on a choice of limiting process. 
As for $\Phi _2 (s_1,s_2;1,\Lambda )$, we can see that the similar situation occurs from \eqref{mainthm}. In later sections, we will observe this type of special values.
\end{remark}

\section{The double series $ \Phi _2 (s_1,s_2;1,\widetilde{\alpha} )$}\label{sec-3}

In this section, by the same principle as in the previous section, we consider a general class of double series $\Phi _2 (s_1,s_2;1,\widetilde{\alpha} )$ defined by 
\eqref{def_double_tilde}.
It is well-known that
\begin{equation}
\frac{1}{\zeta(s)}=\sum _{n=1}^\infty \frac {\mu(n)}{n^s}\quad (\Re s>1),\label{mu-Dir}
\end{equation}
where $\mu$ is 
the M{\"o}bius function defined for $n\in \n$ by
\begin{eqnarray*}
\mu (n)=\left\{ \begin{array}{ll}
1 & (n=1),\\
(-1)^r & (n :{\rm squarefree\ and}\ n=p_1\cdots p_r,\ p_i\in \p ),\\
0 & (n :{\rm not\ squarefree}) \\
\end{array} \right. 
\end{eqnarray*}
(see \cite[$\S$ 1.1]{Titch}). Hence, for $\Re s>\max\{1,\delta\}$, 
we have
$$\frac{\Phi (s;\alpha)}{\zeta(s)}=\sum _{m=1}^\infty \sum _{n=1}^\infty\frac {{\alpha}(m)\mu(n)}{(mn)^s}.$$
Therefore \eqref{def_tilde_alpha} implies 
\begin{equation}
\widetilde{\alpha}(n)=\sum_{1\leq d \leq n \atop d\mid n}\alpha\left(\frac{n}{d}\right)\mu(d). \label{Rev-form}
\end{equation}


\begin{prop}\label{Prop-region}
If $\alpha$ satisfies the condition (I) of the class $\mathcal{A}$, we see that 
$\Phi _2 (s_1,s_2;1,\widetilde{\alpha} )$ is absolutely convergent in the region
\begin{equation}
\left\{(s_1,s_2)\in \mathbb{C}^2 \mid \Re s_2>\max\{1,\delta\},\ \Re (s_1+s_2)>\max\{ 2,1+\delta\}\right\}.\label{conv-region}
\end{equation}
\end{prop}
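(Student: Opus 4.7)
The plan is to reduce the convergence of the double sum to convergence of two single Dirichlet series, via the elementary estimate that lets the denominator $(m_1+m_2)^{\sigma_2}$ be split multiplicatively between $m_1$ and $m_2$.

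First, I would record the estimate on the single Dirichlet series associated with $\widetilde{\alpha}$. From \eqref{Rev-form}, $|\widetilde{\alpha}(n)|\leq \sum_{d\mid n}|\alpha(n/d)|$, so writing $\sigma=\Re s$ and using the convolution structure,
\begin{equation*}
\sum_{n=1}^{\infty}\frac{|\widetilde{\alpha}(n)|}{n^{\sigma}}\;\leq\;\sum_{n=1}^{\infty}\frac{1}{n^{\sigma}}\sum_{d\mid n}\left|\alpha\!\left(\tfrac{n}{d}\right)\right|\;=\;\zeta(\sigma)\sum_{m=1}^{\infty}\frac{|\alpha(m)|}{m^{\sigma}},
\end{equation*}
which converges as soon as $\sigma>1$ and $\sigma>\delta$, i.e.\ $\sigma>\max\{1,\delta\}$, by hypothesis (I) on $\alpha$.

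Next, for the double sum, I would use that for any $a,b\geq 0$ with $a+b=\sigma_2$ one has $(m_1+m_2)^{\sigma_2}\geq m_1^{a}m_2^{b}$, so with $\sigma_j=\Re s_j$,
\begin{equation*}
\sum_{m_1=1}^{\infty}\sum_{m_2=1}^{\infty}\frac{|\widetilde{\alpha}(m_2)|}{m_1^{\sigma_1}(m_1+m_2)^{\sigma_2}}\;\leq\;\left(\sum_{m_1=1}^{\infty}\frac{1}{m_1^{\sigma_1+a}}\right)\!\left(\sum_{m_2=1}^{\infty}\frac{|\widetilde{\alpha}(m_2)|}{m_2^{b}}\right).
\end{equation*}
The right-hand side is finite provided $\sigma_1+a>1$ and $b>\max\{1,\delta\}$ (the latter by the first step). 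Such a splitting $a+b=\sigma_2$ with $a\in[0,\sigma_2]$ exists exactly when $1-\sigma_1<\sigma_2-\max\{1,\delta\}$ and $\sigma_2-\max\{1,\delta\}>0$, which is precisely the region \eqref{conv-region}. Choosing any admissible $a$ then yields absolute convergence.

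There is no real obstacle here; the only thing to check is that the two inequalities defining \eqref{conv-region} are genuinely equivalent to the solvability of the system $\sigma_1+a>1$, $b>\max\{1,\delta\}$, $a,b\geq 0$, $a+b=\sigma_2$, which is an elementary verification. The rest of the argument is standard, relying only on Möbius inversion for the coefficients and the crude interpolation bound on $(m_1+m_2)^{\sigma_2}$.
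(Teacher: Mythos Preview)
Your argument is correct and reaches the same region, but by a cleaner route than the paper's. The paper bounds $|\widetilde{\alpha}(n)|\leq\sum_{d\mid n}|\alpha(d)|$ as you do, then expands $n=dl$ and splits the $(m,l)$-sum according to whether $m\leq dl$ or $m>dl$, estimating $(m+dl)^{\sigma_2}$ by $(dl)^{\sigma_2}$ or $m^{\sigma_2}$ respectively; this produces two pieces $\Sigma_1,\Sigma_2$ and a further case analysis on $\sigma_1>1$, $\sigma_1=1$, $\sigma_1<1$ (with a $\log$ appearing at the boundary). Your approach instead first packages the convolution bound into a single estimate $\sum_n|\widetilde{\alpha}(n)|n^{-\sigma}\leq\zeta(\sigma)\sum_m|\alpha(m)|m^{-\sigma}$, and then decouples the double sum in one stroke via the interpolation $(m_1+m_2)^{\sigma_2}\geq m_1^{a}m_2^{b}$ with a free parameter $a\in[0,\sigma_2]$. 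This avoids the dichotomy and the case split entirely, at the modest cost of a small existence check for an admissible $a$, which you carry out. Both methods yield exactly \eqref{conv-region}; yours is shorter, while the paper's decomposition is closer in spirit to the standard derivation of the region of absolute convergence for $\zeta_2(s_1,s_2)$ cited in the Remark following the proposition.
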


\begin{proof}\ Set $\sigma_j=\Re s_j$ for $j=1,2$. By \eqref{Rev-form}, we have
$$|\widetilde{\alpha}(n)|\leq \sum_{1\leq d \leq n \atop d\mid n}|\alpha(d)|.$$
Hence we have
\begin{align*}
& \sum_{m=1}^\infty \sum_{n=1}^\infty \left| \frac{\widetilde{\alpha}(n)}{m^{s_1}(m+n)^{s_2}}\right|\leq \sum_{m=1}^\infty \sum_{n=1}^\infty  \frac{\sum_{d \mid n}|\alpha(d)|}{m^{\sigma_1}(m+n)^{\sigma_2}}\\
& \quad =\sum_{d=1}^\infty |\alpha(d)|\sum_{m=1}^\infty \sum_{l=1}^\infty  \frac{1}{m^{\sigma_1}(m+dl)^{\sigma_2}}\\
& \quad \leq \sum_{d=1}^\infty |\alpha(d)|\left(\sum_{m,l\geq 1 \atop m\leq dl}\frac{1}{m^{\sigma_1}(dl)^{\sigma_2}}+\sum_{m,l\geq 1 \atop m> dl}\frac{1}{m^{\sigma_1+\sigma_2}}\right)\\
& \quad = \sum_{d=1}^\infty \frac{|\alpha(d)|}{d^{\sigma_2}}\sum_{l=1}^\infty \frac{1}{l^{\sigma_2}}\sum_{1\leq  m\leq dl}\frac{1}{m^{\sigma_1}}+\sum_{d=1}^\infty |\alpha(d)|\sum_{l=1}^\infty \sum_{m> dl}\frac{1}{m^{\sigma_1+\sigma_2}}\\
& \quad =:\Sigma_1+\Sigma_2,
\end{align*}
say.
As for $\Sigma_2$, we first assume $\sigma_1+\sigma_2>1$. Then we have
\begin{align*}
& \sum_{m> dl}\frac{1}{m^{\sigma_1+\sigma_2}} \leq  \int_{dl}^\infty \frac{dt}{t^{\sigma_1+\sigma_2}}=\left[\frac{t^{1-\sigma_1-\sigma_2}}{1-\sigma_1-\sigma_2}\right]_{dl}^{\infty} \ll (dl)^{1-\sigma_1-\sigma_2}. 
\end{align*}
Hence we obtain
\begin{align*}
\Sigma_2& \ll \sum_{d=1}^\infty |\alpha(d)|\sum_{l=1}^\infty (dl)^{1-\sigma_1-\sigma_2}\\
& = \sum_{d=1}^\infty \frac{|\alpha(d)|}{d^{\sigma_1+\sigma_2-1}}\sum_{l=1}^\infty \frac{1}{l^{\sigma_1+\sigma_2-1}}.
\end{align*}
By the condition (I) of the class $\mathcal{A}$, 
we see that the first and the second sums are convergent for $\sigma_1+\sigma_2-1>\delta$ and $\sigma_1+\sigma_2-1>1$, respectively. Therefore $\Sigma_2$ is convergent for 
\begin{equation}
\sigma_1+\sigma_2>\max \{2,1+\delta\}.  \label{S2-region}
\end{equation}
As for $\Sigma_1$, we remark that
\begin{equation*}
\sum_{1\leq  m\leq dl}\frac{1}{m^{\sigma_1}}\ll 
\begin{cases}
1 & (\sigma_1>1),\\
\log (dl) & (\sigma_1=1),\\
(dl)^{1-\sigma_1} & (\sigma_1<1).
\end{cases}
\end{equation*}
If $\sigma_1>1$, we have
$$\Sigma_1 \ll \sum_{d=1}^\infty \frac{|\alpha(d)|}{d^{\sigma_2}}\sum_{l=1}^\infty \frac{1}{l^{\sigma_2}},$$
where the first and the second sums are convergent for $\sigma_2>\delta$ and $\sigma_2>1$, respectively. Hence $\Sigma_1$ in this case is convergent for 
$\sigma_2>\max \{1,\delta\}$. 

If $\sigma_1=1$, we have
\begin{align*}
\Sigma_1 & \ll \sum_{d=1}^\infty \frac{|\alpha(d)|}{d^{\sigma_2}}\sum_{l=1}^\infty \frac{\log (dl)}{l^{\sigma_2}}\\
& \ll \sum_{d=1}^\infty \frac{|\alpha(d)|\log d}{d^{\sigma_2}}\sum_{l=1}^\infty \frac{1}{l^{\sigma_2}}+ \sum_{d=1}^\infty \frac{|\alpha(d)|}{d^{\sigma_2}}\sum_{l=1}^\infty \frac{\log l}{l^{\sigma_2}}.
\end{align*}
As well as the case $\sigma_1>1$, we see that $\Sigma_1$ in this case is convergent for 
$\sigma_2>\max \{1,\delta\}$. 

If $\sigma_1<1$, we have
\begin{align*}
\Sigma_1 & \ll \sum_{d=1}^\infty \frac{|\alpha(d)|}{d^{\sigma_2}}\sum_{l=1}^\infty \frac{(dl)^{1-\sigma_1}}{l^{\sigma_2}}= \sum_{d=1}^\infty \frac{|\alpha(d)|}{d^{\sigma_1+\sigma_2-1}}\sum_{l=1}^\infty \frac{1}{l^{\sigma_1+\sigma_2-1}}.
\end{align*}
Hence we can similarly see that $\Sigma_1$ in this case is convergent for 
$\sigma_1+\sigma_2>\max \{2,1+\delta\}$. 

Combining these three cases, we see that $\Sigma_1$ is convergent for 
\begin{equation}
\sigma_1\geq 1,\ \sigma_2>\max \{1,\delta\}  \label{S1-region-1}
\end{equation}
or 
\begin{equation}
\sigma_1< 1,\ \sigma_1+\sigma_2>\max \{2,1+\delta\}.  \label{S1-region-2}
\end{equation}
We obviously see that the region \eqref{S2-region} includes both the regions \eqref{S1-region-1} and \eqref{S1-region-2}. Hence, denoting by $\Omega$ the union of regions \eqref{S1-region-1} and \eqref{S1-region-2}, we see that $\Phi _2 (s_1,s_2;1,\widetilde{\alpha} )$ is absolutely convergent in $\Omega$. We remark that 
\begin{equation*}
\Omega=
\begin{cases}
\{ \Re s_1\geq 1,\ \Re s_2>1\} \cup \{ \Re s_1< 1,\ \Re s_1+\Re s_2>2\} & (\delta<1),\\
\{ \Re s_1\geq 1,\ \Re s_2>\delta\} \cup \{ \Re s_1< 1,\ \Re s_1+\Re s_2>1+\delta\} & (\delta\geq 1).
\end{cases}
\end{equation*}
Therefore we see that $\Omega$ is equal to the region \eqref{conv-region}. This completes the proof.
\end{proof}

\begin{remark}
It is known that $\zeta_2(s_1,s_2)$ is absolutely convergent in the region
$\Re s_2>1, \Re(s_1+s_2)>2$ (see \cite[(3.2)]{Matsumoto}).
Proposition \ref{Prop-region} gives a generalization of this fact.
\end{remark}

\section{Analytic continuation of $\Phi _2 (s_1,s_2;1,\widetilde{\alpha} )$}\label{sec-4}

In this section, we assume $\alpha\in\mathcal{A}$, 
Moreover we assume the following condition for $\zeta(s)$.

\begin{assumption}\label{Ass-2}\ Assume that all non-trivial zeros of $\zeta(s)$ are simple and that
\begin{equation}\label{order:1/zeta'(rho)}
\frac 1{\zeta '(\rho _n)}=O(\abs {\rho _n}^B)\quad (n\to \infty),
\end{equation}
with some constant $B> 0$. 
\end{assumption}

Note that (\ref{order:1/zeta'(rho)}) can be regarded as a quantitative version of the
well-known ``simplicity conjecture'' on the zeros of $\zeta(s)$, and also, 
as a weak version of 
\begin{equation*}
\sum _{0<\Im {\rho _n}\leq T}\frac 1{\abs {\zeta '(\rho _n)}^{2k}}=O(T(\log T)^{(k-1)^2})\ \ (k\in \r;\,k>0 )
\end{equation*}
with $k=1/2$, which is conjectured independently by Gonek \cite{Gonek} and Hejhal \cite{Hejhal}. 

Under the above assumptions, we
give the analytic continuation of 
$\Phi _2(s_1,s_2;1,\tilde{\alpha} )$.
For this aim, we first recall some properties of $1/\zeta (s)$.

Using the functional equation of $\zeta (s)$ (see \cite[$\S$ 2.1]{Titch}):
\begin{equation}\label{feq:zeta}
\zeta (s)=2\Gamma (1-s)\sin \frac {\pi s}2 (2\pi )^{s-1}\zeta (1-s),
\end{equation}
we can obtain
\begin{align}\label{value:zeta'(-2k)}
\zeta '(-2k)&=\frac \pi 2\cos \left( \frac {-2k\pi} 2\right) 2\Gamma (1+2k)(2\pi )^{-2k-1}\zeta (1+2k)\\
&=\frac{(-1)^k (2k)!}{2(2\pi )^{2k}}\zeta (1+2k)\neq 0\ \ (k\in \n ).\nonumber
\end{align}
Therefore all trivial zeros of $\zeta (s)$ produce simple poles of $1/\zeta (s)$. Hence we obtain the following. 

\begin{lem}\label{residue:1/zeta}
Under Assumption \ref{Ass-2}, 
all poles of $1/\zeta (s)$ are simple and the residues are
\begin{equation*}
\displaystyle{\Res_{s=\rho _n}}\frac 1{\zeta (s)}=\frac 1{\zeta '(\rho _n)},
\end{equation*}
\begin{equation*}
\displaystyle{\Res_{s=-2k}}\ \frac 1{\zeta (s)}=\frac 1{\zeta '(-2k)}=\frac {(-1)^k 2(2\pi )^{2k}}{(2k)!\zeta (1+2k)}\ \ (k\in \n ).
\end{equation*}
\end{lem}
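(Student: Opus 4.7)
The plan is to deduce the lemma directly from three ingredients: the known pole structure of $\zeta(s)$, Assumption \ref{Ass-2} on the non-trivial zeros, and the explicit value of $\zeta'(-2k)$ already obtained in \eqref{value:zeta'(-2k)} via the functional equation. First I would note that $\zeta(s)$ is holomorphic on $\c$ except for a simple pole at $s=1$, so $1/\zeta(s)$ is meromorphic on $\c$; the pole at $s=1$ becomes a zero of $1/\zeta(s)$, and the poles of $1/\zeta(s)$ coincide, with the same orders, with the zeros of $\zeta(s)$, namely the trivial zeros $s=-2k$ ($k\in\n$) and the non-trivial zeros $\rho_n$.

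For the non-trivial zeros, Assumption \ref{Ass-2} asserts simplicity, so locally $\zeta(s)=\zeta'(\rho_n)(s-\rho_n)+O((s-\rho_n)^2)$ with $\zeta'(\rho_n)\neq 0$, and hence $1/\zeta(s)$ has a simple pole at $\rho_n$ with residue $1/\zeta'(\rho_n)$. For the trivial zeros, the identity \eqref{value:zeta'(-2k)} already produced from the functional equation gives the closed form $\zeta'(-2k)=(-1)^k(2k)!\,\zeta(1+2k)/\bigl(2(2\pi)^{2k}\bigr)$, which is nonzero because $\zeta(1+2k)>1$ for $k\in\n$. Consequently each trivial zero is also simple, and the residue of $1/\zeta(s)$ at $s=-2k$ equals the reciprocal of $\zeta'(-2k)$, producing the stated expression.

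There is no genuine obstacle here: simplicity of the non-trivial zeros is hypothesized, simplicity of the trivial zeros is forced by the explicit nonvanishing of $\zeta'(-2k)$, and the residue formulas follow from the elementary fact that the residue of $1/f$ at a simple zero of $f$ is $1/f'$. The only nontrivial computation, namely \eqref{value:zeta'(-2k)}, has already been carried out in the lead-up to the statement, so this lemma is essentially a bookkeeping consequence of what has just been established.
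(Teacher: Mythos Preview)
Your proposal is correct and follows essentially the same approach as the paper: both use Assumption~\ref{Ass-2} for the simplicity of the non-trivial zeros, invoke the already-computed nonvanishing of $\zeta'(-2k)$ from \eqref{value:zeta'(-2k)} for the trivial zeros, and then read off the residues via the standard formula for $1/f$ at a simple zero of $f$.
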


\begin{proof}
By the above fact and Assumption \ref{Ass-2}, 
the simplicity of each zero is obvious. Therefore we have
\begin{equation*}
\displaystyle{\Res_{s=\rho _n}}\frac 1{\zeta (s)}=\lim _{s\to \rho _n}\frac {s-\rho _n}{\zeta (s)}=\frac 1{\zeta '(\rho _n)},
\end{equation*}
\begin{equation*}
\displaystyle{\Res_{s=-2k}}\ \frac 1{\zeta (s)}=\lim _{s\to -2k}\frac {s+2k}{\zeta (s)}=\frac 1{\zeta '(-2k)}.
\end{equation*}
Substituting (\ref{value:zeta'(-2k)}) into the second equation, we complete the proof of Lemma \ref{residue:1/zeta}.
\end{proof}

Applying the Mellin-Barnes formula (\ref{MB}), we obtain 
\begin{align*}
\Phi _2 (s_1,s_2;1,\widetilde{\alpha} )
& \ =\sum _{m,n=1}^\infty \frac {\widetilde{\alpha} (n)}{m^{s_1+s_2}}\frac{1}{2\pi i\Gamma(s_2)}\int_{(c)}\Gamma(s_2+z)\Gamma(-z)\left(\frac{n}{m}\right)^zdz\\
& \ =\frac{1}{2\pi i\Gamma(s_2)}\int_{(c)}\Gamma(s_2+z)\Gamma(-z)\Phi(-z;\widetilde{\alpha})\zeta(s_1+s_2+z)dz,
\end{align*}
where $\Re s_2>\max\{1,\delta\},\Re (s_1+s_2)>\max\{2,1+\delta\}$ and 
$\max\{-\Re(s_2),\,1-\Re(s_1+s_2)\} < c < \min\{-1,-\delta\}<0$. 

As in the proof of Theorem \ref{state1}, we shift the path $(c)$ to ($N-\varepsilon$), and apply 
Lemma \ref{residue:1/zeta} to $\Phi _2(s_1,s_2;1,\widetilde{\alpha} )$. 
Here we recall the definition of the Bernoulli numbers $\{B_n\}_{n\geq 0}$, which are given by
\begin{equation*}
\frac {te^t}{e^t-1}=\sum _{n=0}^\infty B_n\frac {t^n}{n!}.
\end{equation*}
Then we can obtain
\begin{align}\label{mainthm2}
&\Phi _2(s_1,s_2;1,\widetilde{\alpha} )\\
&=\frac{\Gamma(s_2-\delta)\Gamma(\delta)}{\Gamma(s_2)} \Res_{s=\delta}\ \left(\frac{\Phi(s;\alpha)}{\zeta(s)}\right)\zeta(s_1+s_2-\delta) \notag\\
& \ -2\Phi(0;\alpha)\zeta (s_1+s_2)-\sum _{k=1 \atop k:{\rm odd}}^{N-1}\binom {-s_2}k \frac {(k+1)\Phi(-k;\alpha)}{B_{k+1}}\zeta (s_1+s_2+k)\nonumber \\
&\ +\sum _{k=1\atop k:{\rm even}}^{N-1}  \bigg[ \binom {-s_2}k\bigg\{ \frac {(-1)^{k/2}2(2\pi )^{k}\Phi(-k;\alpha)}{\zeta (1+k)}\notag \\
& \ \ \times \left(b_k \zeta(s_1+s_2+k)-\frac{1}{k!}\zeta'(s_1+s_2+k)\right)+c_k(\alpha)\zeta(s_1+s_2+k)\bigg\}\nonumber \\
& \qquad -\frac{\Gamma'(s_2+k)}{\Gamma(s_2)}\frac {(-1)^{k/2}2(2\pi )^{k}\Phi(-k;\alpha)}{(k!)^2\zeta (1+k)}\zeta(s_1+s_2+k)\bigg]\notag\\
&\ +\frac 1 {\Gamma (s_2)}\sum _{n=1}^\infty \Gamma (s_2-\rho _n)\Gamma (\rho _n)\frac {\Phi(\rho_n;\alpha)}{\zeta'(\rho _n)}\zeta (s_1+s_2-\rho _n)\nonumber \\
&\ +\frac 1 {2\pi i\Gamma (s_2)}\int _{(N-\varepsilon )}\Gamma (s_2+z)\Gamma (-z)\Phi(-z;\widetilde{\alpha})\zeta (s_1+s_2+z)dz\nonumber
\end{align}
for $N\in \mathbb N$, where $\varepsilon$ is a small positive number, $b_k$ is defined by \eqref{def-bk} and $c_k(\alpha)$ is the constant term of the Laurent series expansion of $\Phi(s;\alpha)/\zeta (s)$ at $s=-k$ for any even positive integer $k$, namely
\begin{equation}
 c_k(\alpha)=\frac 1{2\pi i}\int _{|\xi +k|=\frac 1 2} \frac {\Phi(s;\alpha)}{\zeta(\xi)}\frac{1}{\xi +k} d\xi\quad (k\in \n;\ k:\text{even}). \label{def-ck}
\end{equation}
The validity of shifting the path of the integration can be shown as in Section \ref{sec-2}.
In fact, we can find an arbitrarily large $t_1>0$ for which
\begin{equation}\label{order:1/zeta}
\frac 1{\zeta (\sigma +it_1)}=O(|t_1|^A)\ \ (A:{\rm a\ positive\ constant})
\end{equation}
holds for $-1\leq\sigma\leq 2$ (see \cite[p.218, Theorem 9.7]{Titch}), while we can
easily see by the functional equation that $1/\zeta(s)$ is of polynomial order in $|t|$
in the half-plane $\sigma\leq -1$ as $|t| \to \infty$.    Using these facts and the condition (III) of the
class $\mathcal{A}$, we find that the argument in Section \ref{sec-2} works again.
 
The second, the third and the fourth term on the right-hand side of (\ref{mainthm2}) are meromorphic on the whole space $\c ^2$. The last integral can be analytically continued to the region $\{ (s_1,s_2)\in \mathbb C ^2|\ \Re s_2>-N+\varepsilon,\Re (s_1+s_2)>1-N+\varepsilon \}$ by using the condition (III)
 and noting that in this region the poles of the integrand are not on the path of integration. 
It follows from (\ref{ST}), (\ref{order:zeta}) and Assumptions \ref{Ass-2} that the fifth term on the right-hand side of (\ref{mainthm2}) is convergent absolutely for all $(s_1,s_2)\in \c ^2$ except for its singularities $s_2=-j+\rho _n$ ($j\in \n _0$) and $s_1+s_2=1+\rho _n$. Finally we note that the first term is also meromorphic on $\c ^2$ and particularly vanishes when $\Phi(s;\alpha)$ has no pole or $\delta=1$ except for the case $s_2=1$. 

The above arguments lead to the following theorem. The last assertion can be proved by the same way as in Theorem \ref{C-2-1}.

\begin{thm}\label{Th-4-2}
Let $\alpha\in\mathcal{A}$.
Under Assumption \ref{Ass-2}, $\Phi _2(s_1,s_2;1,\widetilde{\alpha} )$ can be continued meromorphically to the whole space $\c ^2$ by {\rm (\ref{mainthm2})}.  
The possible singularities of $\Phi _2 (s_1,s_2;1,\widetilde{\alpha} )$ are located 
only on the subsets of $\c^2$ defined by one of the following equations:
\begin{equation}
\begin{split}
& s_1+s_2=1-k\quad (k\in \mathbb{N}_0),\\
& s_2=-k\quad (k\in \mathbb{N},\ k\geq 2),\\
& s_2=-l+\rho_n\quad (l\in \mathbb{N}_0,\ n\in \mathbb{N}),\\
& s_1+ s_2=1+\rho_n\quad (n\in \mathbb{N}),\\
& s_2=-l+\delta\quad  (l\in \mathbb{N}_0),\\
& s_1+ s_2=1+\delta,
\end{split}
\label{coro-2}
\end{equation}
where the last two equations are omitted when $\Phi(s;\alpha)$ has no pole or $\delta=1$. All of them are ``true'' singularities.
\end{thm}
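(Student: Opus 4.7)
The plan is to adapt the Mellin--Barnes argument of Section~\ref{sec-2} to the general setting, then read off the singular locus from the resulting expansion and verify non-cancellation as in Theorem~\ref{C-2-1}. First I would start in the region of absolute convergence given by Proposition~\ref{Prop-region}, choose $c$ with $\max\{-\Re s_2,\,1-\Re(s_1+s_2)\}<c<\min\{-1,-\delta\}$, apply \eqref{MB} with $s=s_2$, $\lambda=n/m$, and justify the interchange of summation and integration to obtain
\[
\Phi_2(s_1,s_2;1,\widetilde{\alpha})=\frac{1}{2\pi i\,\Gamma(s_2)}\int_{(c)}\Gamma(s_2+z)\Gamma(-z)\,\Phi(-z;\widetilde{\alpha})\,\zeta(s_1+s_2+z)\,dz.
\]
Since $\Phi(-z;\widetilde{\alpha})=\Phi(-z;\alpha)/\zeta(-z)$, condition (III) of $\mathcal{A}$ together with \eqref{ST}, \eqref{order:zeta}, \eqref{order:1/zeta}, and polynomial growth of $1/\zeta$ in the left half-plane (via the functional equation) allow the contour to be shifted to $(N-\varepsilon)$ for arbitrary $N\in\mathbb{N}$, mirroring the justification given in Section~\ref{sec-2}.

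The next step is the residue computation. Poles of the integrand in the strip $c<\Re z<N-\varepsilon$ come from $\Gamma(-z)$ at $z=0,1,\ldots,N-1$, from the pole of $\Phi(-z;\alpha)/\zeta(-z)$ at $z=-\delta$, from the non-trivial zeros via $z=-\rho_n$ (using simplicity and Lemma~\ref{residue:1/zeta}), and from the trivial zeros via $z=2k$. At odd positive $z=k$ only $\Gamma(-z)$ contributes, giving the third sum on the right of \eqref{mainthm2}. At even positive $z=k$ both $\Gamma(-z)$ and $1/\zeta(-z)$ are simple, so the integrand has a double pole whose residue is computed from the Laurent expansions
\[
\Gamma(-z)=\tfrac{(-1)^{k-1}}{k!}\cdot\tfrac{1}{z-k}+b_k+O(z-k),\qquad \tfrac{1}{\zeta(-z)}=\tfrac{(-1)^{k/2}2(2\pi)^k}{k!\,\zeta(k+1)}\cdot\tfrac{1}{z-k}+\cdots
\]
combined with Taylor expansions of $\Gamma(s_2+z)$, $\Phi(-z;\alpha)$ and $\zeta(s_1+s_2+z)$; the result is the bracketed expression involving $b_k$, $c_k(\alpha)$, $\zeta'$ and $\Gamma'$. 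The residue at $z=-\delta$ gives the first term and the residue at $z=0$ (noting $\Phi(0;\widetilde{\alpha})=-2\Phi(0;\alpha)$, which follows from $1/\zeta(0)=-2$) produces the second term.

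For the meromorphic continuation, the non-integral terms are manifestly meromorphic on $\mathbb{C}^2$: the sum over $\rho_n$ converges absolutely away from its singularities thanks to Assumption~\ref{Ass-2} and the Stirling bound \eqref{ST}, while the shifted integral is holomorphic on $\mathcal{D}_N=\{\Re s_2>-N+\varepsilon,\ \Re(s_1+s_2)>1-N+\varepsilon\}$ because poles of $\Gamma(s_2+z)$ and $\zeta(s_1+s_2+z)$ stay off the contour. Letting $N\to\infty$ yields the continuation, and the list of possible singular hyperplanes \eqref{coro-2} follows by inspection (the last two lines coming from the $\Phi(s;\alpha)/\zeta(s)$ pole at $s=\delta$, the first from $\zeta(s_1+s_2)$ factors, the second from $\Gamma'(s_2+k)/\Gamma(s_2)$ as in \eqref{Gamma-property}, and the $\rho_n$-lines from the infinite sum).

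The expected main obstacle is the final assertion that all listed hyperplanes are \emph{true} singularities. I would handle this exactly as in Theorem~\ref{C-2-1}: introduce $u=s_1+s_2$, view \eqref{mainthm2} as a function of $(u,s_2)$, and for each candidate hyperplane extract the principal part of the Laurent expansion and show the leading coefficient is not identically zero as a function of the remaining variable. The nontrivial case is $s_2=-l$ with $l\geq 2$, where contributions from the even-$k$ terms coalesce through \eqref{Gamma-property}; one combines them and evaluates the residual coefficient at a carefully chosen value of $u$, exploiting $\zeta(-2m)=0$ and $\zeta(0)\neq 0$ just as in the proof of Theorem~\ref{C-2-1}. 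The remaining hyperplanes either inherit non-cancellation directly from the corresponding term (e.g.\ $s_2=-l+\rho_n$ from the $\rho_n$-series, or $s_1+s_2=1-k$ from a $\zeta'$-type factor with a nonzero coefficient) or from the fact that the residue at $z=-\delta$ appearing in the first term cannot cancel with any other contribution since no other term is singular on $s_2=-l+\delta$ or $s_1+s_2=1+\delta$ when $\delta\notin\{1\}\cup\{\rho_n\}\cup\mathbb{Z}_{\leq 0}$.
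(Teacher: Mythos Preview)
Your proposal is correct and follows essentially the same route as the paper: the Mellin--Barnes representation, the contour shift justified via \eqref{ST}, \eqref{order:zeta}, \eqref{order:1/zeta} and condition~(III), the residue computation producing \eqref{mainthm2}, and the reduction of the ``true singularity'' claim to the argument of Theorem~\ref{C-2-1} via the change of variables $u=s_1+s_2$. The only slip is a sign in your Laurent expansion of $1/\zeta(-z)$ at $z=k$ (the residue is $-1/\zeta'(-k)$, not $+1/\zeta'(-k)$), but this is a bookkeeping detail that does not affect the structure of the argument.
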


\section{Reverse values of $\Phi _2 (s_1,s_2;1,\Lambda )$}\label{sec-5}

As stated in Remark \ref{Rem-2}, Akiyama, Egami and Tanigawa \cite{AET} observed that the value of multiple zeta function \eqref{EZ} at any point $(-m_1,\ldots,-m_r)$ $(m_1,\ldots,m_r\in \n_0)$ on the sets of singularities is what is called a point of indeterminacy, whose ``value'' can be understood only as a limit value
which depends on a limiting process. They considered 
\begin{align*}
& \lim_{s_1\to -m_1}\lim_{s_2\to -m_2}\cdots \lim_{s_r\to -m_r}\zeta_r(s_1,s_2,\ldots,s_r),
\end{align*}
which they called the \textit{regular value}.   Akiyama and Tanigawa \cite{AT} further defined two different types of limit values at $(-m_1,\ldots,-m_r)$ by
\begin{align*}
& \lim_{s_r\to -m_r}\cdots \lim_{s_2\to -m_2} \lim_{s_1\to -m_1}\zeta_r(s_1,s_2,\ldots,s_r),\\
& \lim_{\varepsilon \to 0}\ \zeta_r(-m_1+\varepsilon,-m_2+\varepsilon,\ldots,-m_r+\varepsilon)
\end{align*}
which are called the \textit{reverse value} and the \textit{central value}, respectively 
(for generalizations, see also Sasaki \cite{Sasaki} \cite{Sasaki2}, Komori \cite{Komori}, 
Onozuka \cite{Onozuka}, and Onozuka, Wakabayashi and the first-named author \cite{MOW}). 

Checking the right-hand sides of \eqref{mainthm} and \eqref{mainthm2}, we can see that the same situation as above occurs for $\Phi _2 (s_1,s_2;1,\Lambda )$ and for $\Phi_2(s_1,s_2;1,\widetilde{\alpha})$. 
We aim to consider reverse values of them in this and the next section, because
reverse values seem to be more interesting than regular values in the present
situation (see Remark \ref{Rem-5-5}). 

Here we define the reverse value of $\Phi _2 (s_1,s_2;1,\Lambda )$ at $(u_1,u_2)$ on each singular set determined by 
\begin{align*}
& \Phi _2^{\rm Rev} (u_1,u_2;1,\Lambda )=\lim_{s_2\to u_2}\lim_{s_1\to u_1}\Phi _2 (s_1,s_2;1,\Lambda ).
\end{align*}
We give several examples of explicit formulas for reverse values by use of $\lim_{s\to -k}1/\Gamma(s)=0$ for $k\in \mathbb{N}_0$. 
Let $(s)_k:=s(s+1)(s+2)\cdots (s+k-1)$.

\begin{prop}\label{spv1} Let $m,n\in \n _0$ with $2\mid (m+n)$ and assume $m\geq 1$ when $n\geq 2$. Then 
\begin{align*}
&\Phi _2^{\rm Rev} (-m,-n;1,\Lambda )\\
&\ =\frac {B_{m+n+2}}{(n+1)(m+n+2)}+(\log 2\pi ) \frac{B_{m+n+1}}{m+n+1}\\
&\ \ -\sum _{k=1 \atop k:{\rm odd}}^n \binom n k \frac {k+1}{B_{k+1}}\zeta '(-k)\frac {B_{m+n-k+1}}{m+n-k+1}\\
&\ \ +\sum _{k=1\atop k:{\rm even}}^n  \binom n k \frac {B_{m+n-k+1}}{m+n-k+1}\left( -a_k+k!b_k\right) \\
& \qquad\quad -\frac{(-1)^m m!n!}{(m+n+1)!} M(-m-n-1).
\end{align*}
\if0
where
\begin{equation*}
R_{m,n}^1=\left\{ \begin{array}{ll}
0 & (m+n:{\rm odd}) \\
{(-1)^{m} m!n!}/{(m+n+1)!} & (m+n:{\rm even}) \\
\end{array} \right. 
\end{equation*}
\begin{equation*}
R_{m,n}^2=\left\{ \begin{array}{ll}
{(-1)^{m} m!n!}/{(m+n+1)!} & (m+n:{\rm odd}) \\
0 & (m+n:{\rm even}) \\
\end{array} \right. .
\end{equation*}
\fi
\end{prop}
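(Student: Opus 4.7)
The strategy is to start from the explicit meromorphic continuation formula \eqref{mainthm}, substitute $s_1=-m$, and then pass to the limit $s_2\to -n$. Choose $N\in\mathbb{N}$ with $N>m+n+1$, so that $(-m,-n)\in\mathcal{D}_N$ and neither the series over non-trivial zeros (fifth term of \eqref{mainthm}) nor the contour integral (sixth term) has any singularity there. Since both of those terms carry the explicit prefactor $1/\Gamma(s_2)$ and $1/\Gamma(-n)=0$, each contributes $0$ to the reverse value. The first two terms of \eqref{mainthm} are evaluated at $(-m,-n)$ directly via $\zeta(-\ell)=-B_{\ell+1}/(\ell+1)$ and the parity $m+n\in 2\mathbb{Z}$, producing the first two summands of the claimed formula.

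For the odd-$k$ term, the factor $\binom{-s_2}{k}\big|_{s_2=-n}=\binom{n}{k}$ vanishes whenever $k>n$ (the Pochhammer product contains the factor $-s_2-n$). Hence only odd $k\leq n$ and the resonance index $k=m+n+1$ contribute. For odd $k$ with $1\leq k\leq n$, $\zeta(-m+s_2+k)$ is regular at $s_2=-n$ with value $-B_{m+n-k+1}/(m+n-k+1)$ (since $m+n-k$ is a positive odd integer), and the identity $M(-k)=(k+1)\zeta'(-k)/B_{k+1}$ reproduces the third summand of the statement. For $k=m+n+1$, the factor $\binom{-s_2}{m+n+1}$ has a simple zero and $\zeta(-m+s_2+m+n+1)$ a simple pole at $s_2=-n$; writing $\varepsilon=s_2+n$ and separating the vanishing factor from the Pochhammer product one gets
\[
\binom{-s_2}{m+n+1}=\frac{(-1)^{m+1}m!\,n!}{(m+n+1)!}\,\varepsilon+O(\varepsilon^2),
\]
which combines with $\zeta(1+\varepsilon)=1/\varepsilon+O(1)$ to give the final $-\dfrac{(-1)^m m!\,n!}{(m+n+1)!}M(-m-n-1)$ summand.

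The principal technical point is the vanishing of the even-$k$ contribution (fourth term), which at first sight carries a simple pole from $\Gamma'(s_2+k)/\Gamma(s_2)$. Using $(s)_k=\Gamma(s+k)/\Gamma(s)$ one has $\Gamma'(s+k)/\Gamma(s)=(s)_k'+(s)_k\,(\Gamma'/\Gamma)(s)$; combined with the Laurent expansion $(\Gamma'/\Gamma)(s)=-1/(s+n)+(H_n-\gamma)+O(s+n)$ near $s=-n$ (where $H_n=\sum_{j=1}^n 1/j$), this yields, for even $k\leq n$,
\[
\frac{1}{k!}\frac{\Gamma'(s_2+k)}{\Gamma(s_2)}=-\frac{\binom{n}{k}}{s_2+n}+\binom{n}{k}(H_n-\gamma)+O(s_2+n).
\]
Under the hypotheses $m+n\in 2\mathbb{Z}$ and ($n\leq 1$ or $m\geq 1$), for every even $k$ with $k\leq n$ the argument $k-m-n$ is a nonpositive even integer, so $\zeta(k-m-n)=0$ (for the borderline $k=n$ this uses $\zeta(-m)=0$ with $m$ even and $\geq 2$). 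The apparent simple pole is thereby cancelled, and $\frac{1}{k!}\Gamma'(s_2+k)/\Gamma(s_2)\cdot\zeta(-m+s_2+k)$ has the limit $-\binom{n}{k}\zeta'(k-m-n)$, which exactly cancels the limit $\binom{n}{k}\zeta'(k-m-n)$ of $\binom{-s_2}{k}\zeta'(-m+s_2+k)$; meanwhile the $\binom{-s_2}{k}(-a_k+k!b_k)\zeta(-m+s_2+k)$ piece also vanishes. For even $k>n$ both $\binom{-s_2}{k}$ and $\Gamma'(s_2+k)/\Gamma(s_2)$ vanish at $s_2=-n$ (the latter because $(s)_k$ acquires a simple zero there), so no contribution survives. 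On the other hand the displayed even-$k$ sum in the statement is itself identically zero term-by-term in the allowed range (since $m+n-k+1$ is then odd and $\geq 3$, forcing $B_{m+n-k+1}=0$), so both sides agree. Collecting all surviving contributions yields the formula.
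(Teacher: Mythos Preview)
Your proof is correct and follows essentially the same route as the paper: you specialize \eqref{mainthm} at $s_1=-m$, let $s_2\to -n$ with $N>m+n+1$, kill the zero-sum and the integral via $1/\Gamma(-n)=0$, pick up the $k=m+n+1$ resonance from the odd-$k$ block by the same pole/zero cancellation, and use the vanishing $\zeta(k-m-n)=0$ (for even $k\le n$) to tame the $\Gamma'(s_2+k)/\Gamma(s_2)$ pole. The one cosmetic difference is that you go on to observe the even-$k$ sum in the statement is itself identically zero (since $m+n-k+1$ is odd and $\ge 3$), whereas the paper simply records it as $\sum_{k\;{\rm even}}\binom{n}{k}\tfrac{B_{m+n-k+1}}{m+n-k+1}(-a_k+k!b_k)$ without remarking that every term vanishes; both are of course consistent. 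Your digamma computation of the constant term $\binom{n}{k}(H_n-\gamma)$ is correct but, as you implicitly use, unnecessary once $\zeta(k-m-n)=0$ is in hand.
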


\begin{proof} We write (\ref{mainthm}) as
\begin{align}\label{brief}
& \Phi _2(s_1,s_2;1,\Lambda )\\
&\ =\frac {\zeta (s_1 +s_2 -1)}{s_2 -1}-(\log 2\pi ) \zeta (s_1+s_2) 
+\sum _o -\sum _e -\sum _{\rho} +I,\notag 
\end{align}
say.    We let $\lim_{s_2\to -n}\lim_{s_1\to -m}$ on the both sides of (\ref{brief}) under $N>m+n+1$. Then we see that $\sum _{\rho}\to 0$ and $I \to 0$ because $\lim_{s_2\to -n}|\Gamma(s_2)|\to \infty$.    
Also, applying the well-known fact 
\begin{equation*}
\zeta (1-j)=-\frac {B_{j}} {j}\ \ ( j\in \n )
\end{equation*}
(see \cite[$\S$ 2.4]{Titch}),  we obtain
\begin{align*}
\sum _o\to &-\sum _{k=1\atop k:{\rm odd}}^n \binom n k \frac {k+1}{B_{k+1}}\zeta '(-k)\frac {B_{m+n-k+1}}{m+n-k+1}\\
& \ - \frac{(-1)^m m! n!}{(m+n+1)!} M(-m-n-1)
\end{align*}
for $n\geq 1$, and
\begin{align*}
-\sum _e&\to \sum _{k=1\atop k:{\rm even}}^n \bigg[ \binom n k \left\{\frac {B_{m+n-k+1}}{m+n-k+1}\left( -a_k+k!b_k\right)-\zeta'(-m-n+k)\right\} \\
& \qquad\quad +\binom{n}{k}\zeta'(-m-n+k)\bigg]
\end{align*}
for $n\geq 2$ and $m\geq 1$. 
Here, in the calculations of $\sum_o$, 
the pole of $\zeta (s_1+s_2+k)$ for $k=m+n+1$ and $s_1=-m$ at $s_2=-n$ is cancelled with the zero of the binomial coefficient, namely,
\begin{align*}
&\binom {-s_2}{m+n+1} \zeta (s_2+n+1)\\
&=\frac {-s_2(-s_2-1)\cdots (-s_2-n+1)(-s_2-n)(-s_2-n-1)\cdots (-s_2-m-n)}{(m+n+1)!}\\
&\times \left( \frac 1 {s_2+n} +\cdots  \right),
\end{align*}
and so, letting $s_2\to -n$, we have
\begin{align*}
\binom {-s_2}{m+n+1} \zeta (s_2+n+1)
\to &-\frac {n(n-1)\cdots 1\cdot (-1)(-2)\cdots (-m)}{(m+n+1)!}\\
&\ = \frac {(-1)^{m-1}m!n!}{(m+n+1)!},
\end{align*}
because $m+n$ is even. 
Also in the calculations of $\sum_e$, we use \eqref{Gamma-property}, 
we have
$$\lim_{s_2\to -n}-\frac {1}{k!}\frac{\Gamma'(s_2+k)}{\Gamma(s_2)}\zeta(-m+s_2+k)=\binom{n}{k}\zeta'(-m-n+k)$$
for $n\geq 2$ and $m\geq 1$, since $\zeta(-m-n+k)=0$. 
Thus we complete the proof of Proposition \ref{spv1}.
\end{proof}

\begin{remark}\label{R-5-2}
As stated above, Proposition \ref{spv1} holds for $m,n\in \mathbb{N}_0$ with $2\mid m+n$. If $m+n$ is odd with $n\geq 2$, then 
$\Phi_2(s_1,s_2;1,\Lambda)$ is not convergent as ${s_1\to -m}$ and ${s_2\to -n}$. In fact, in this case, the right-hand side of (\ref{mainthm}) is not convergent because of the term $\Gamma'(s_2+k)/\Gamma(s_2)$ for any even $k\ (\leq n)$. We emphasize that for the case $m+n$ is even, the pole of $\Gamma'(s_2+k)/\Gamma(s_2)$ at $s_2=-n$ and the zero of $\zeta(s_2-m+k)$ at $s_2=-n$ are cancelled and this determines finite values. 
\end{remark}

\begin{example}\label{Example-4-1}\ From Proposition \ref{spv1}, we obtain 
\begin{align*}
&\Phi _2^{\rm Rev} (0,0;1,\Lambda )=\frac 1 2\log 2\pi +\frac 1 {12} -12\zeta '(-1),\\
&\Phi _2^{\rm Rev} (-1,0;1,\Lambda )=\frac 1 {12}\log 2\pi -\frac{3}{4}-\frac{\zeta''(-2)}{4\zeta'(-2)}+\frac{\gamma}{2},\\
&\Phi _2^{\rm Rev} (-1,-1;1,\Lambda )=-\frac 1{240}-\zeta '(-1)-20\zeta '(-3).
\end{align*}
\end{example}

\begin{prop}\label{spv2}
For $l=0,1$ and $n\in \n$,
\begin{eqnarray*}
\Phi _2^{\rm Rev} (1+l+\rho _n,-l;1,\Lambda )&=&-(\log 2\pi ) \zeta (1+\rho _n)+\frac {\pi l! \od(\rho _n)}{(\rho _n)_{l+1} \sin \left( \pi \rho _n\right)}\\
&+&\sum _{k=1 \atop k:\rm odd}^l \binom l k M(-k)\zeta (1+k+\rho _n).
\end{eqnarray*}
\end{prop}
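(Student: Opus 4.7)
The plan is to specialize the analytic continuation \eqref{mainthm} at $(s_1,s_2)=(1+l+\rho_n,-l)$ via the reverse order of limits: first $s_1\to 1+l+\rho_n$ with $s_2$ fixed near (but not equal to) $-l$, then $s_2\to -l$. Since $l\in\{0,1\}$, Theorem \ref{C-2-1} shows that the only singular hyperplane through this point is $s_1+s_2=1+\rho_n$ (the would-be singularity $s_2=-l$ requires $l\ge 2$), so the inner limit is genuinely holomorphic in $s_2$ near $-l$.

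Next I would insert $s_1=1+l+\rho_n$ into \eqref{mainthm} and examine each piece as $s_2\to -l$. The prefactor $1/\Gamma(s_2)$ has a simple zero there, which annihilates the integral term and all summands of $\sum_\rho$ except the one of index $n$. The term $\zeta(s_1+s_2-1)/(s_2-1)$ collapses to $\zeta(\rho_n)/(-l-1)=0$. The binomial $\binom{-s_2}{k}|_{s_2=-l}=\binom{l}{k}$ vanishes for $k>l$, so $\sum_o$ contributes only the indices $1\le k\le l$ with $k$ odd, producing $\sum_{k=1,\,k\,\text{odd}}^{l}\binom{l}{k}M(-k)\zeta(1+k+\rho_n)$. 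In $\sum_e$ the indices $k\ge 2$ are even, hence $\binom{l}{k}=0$ for $l\in\{0,1\}$, and the companion piece $\Gamma'(s_2+k)/\Gamma(s_2)$ factors as $(\Gamma'(s_2+k)/\Gamma(s_2+k))\cdot(s_2)_k$, where the first factor is finite at $s_2=-l$ (since $k-l\ge 1$) while the rising factorial $(s_2)_k|_{s_2=-l}$ contains the factor $(-l+l)=0$.

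The only surviving nontrivial contribution is the $n$-th summand of $\sum_\rho$, which after $s_1\to 1+l+\rho_n$ reads
\[
\frac{\od(\rho_n)\,\Gamma(s_2-\rho_n)\,\Gamma(\rho_n)\,\zeta(1+l+s_2)}{\Gamma(s_2)}.
\]
Since $\zeta(1+l+s_2)$ has a simple pole of residue $1$ at $s_2=-l$ and $1/\Gamma(s_2)\sim(-1)^{l} l!\,(s_2+l)$, the quotient $\zeta(1+l+s_2)/\Gamma(s_2)$ tends to $(-1)^{l}l!$. To express $\Gamma(-l-\rho_n)\Gamma(\rho_n)$ compactly, I would iterate $\Gamma(z+1)=z\Gamma(z)$ to get $\Gamma(1-\rho_n)=(-1)^{l+1}(\rho_n)_{l+1}\Gamma(-l-\rho_n)$, and then invoke the reflection formula $\Gamma(\rho_n)\Gamma(1-\rho_n)=\pi/\sin(\pi\rho_n)$. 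Combined with the minus sign in front of $\sum_\rho$, this yields exactly $\pi l!\,\od(\rho_n)/\bigl((\rho_n)_{l+1}\sin(\pi\rho_n)\bigr)$, while the second term of \eqref{mainthm} contributes $-(\log 2\pi)\zeta(1+\rho_n)$.

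I expect the delicate step to be the limit of the $n$-th $\rho$-term, where the simple pole of $\zeta$ at $1$ must be matched precisely against the simple zero of $1/\Gamma$ at $-l$ and the remaining gamma factors have to be repackaged into rising-factorial and sine form via the reflection identity. The restriction $l\in\{0,1\}$ is essential: for $l\ge 2$, the point $(1+l+\rho_n,-l)$ sits on the additional singular hyperplane $s_2=-l$ coming from $\sum_e$, and that pole would generate further residues to be added to the reverse value.
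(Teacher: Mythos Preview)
Your proof is correct and follows essentially the same route as the paper's. The paper streamlines one step by choosing $N=2$ in \eqref{mainthm}, so that $\sum_e$ is empty from the outset and there is nothing to check; you instead keep a general $N$ and verify term by term that each even-$k$ contribution (both the $\binom{-s_2}{k}$ piece and the $\Gamma'(s_2+k)/\Gamma(s_2)$ piece) vanishes at $s_2=-l$ for $l\in\{0,1\}$. Your factorization $\Gamma'(s_2+k)/\Gamma(s_2)=\psi(s_2+k)\,(s_2)_k$ and the observation that $(s_2)_k|_{s_2=-l}=0$ since $l\le k-1$ are correct, so both arguments are valid. Your treatment of the surviving $n$-th $\rho$-summand, matching the simple pole of $\zeta(1+l+s_2)$ against the simple zero of $1/\Gamma(s_2)$ and then unwinding $\Gamma(-l-\rho_n)\Gamma(\rho_n)$ via the recursion and reflection formulas, is exactly the computation the paper carries out.
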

\begin{proof}
We take the limit $s_1\to 1+l+\rho _n$ and then $s_2\to -l$ in (\ref{brief}) with $N=2$.
Then $I$ vanishes and
we obtain 
\begin{eqnarray*}
\sum _{\rho}&=&(-1)^l l! \od(\rho _n)\Gamma (-l-\rho _n)\Gamma(\rho _n)\\
&=&-\frac {l! \od(\rho _n)}{(\rho _n)_{l+1}}\Gamma (1-\rho _n)\Gamma(\rho _n)\\
&=&-\frac {\pi l! \od(\rho _n)}{(\rho _n)_{l+1} \sin \left( \pi \rho _n\right)}
\end{eqnarray*}
by the known formula
$\Gamma (s)\Gamma (1-s)=\pi/\sin \left( \pi s\right).$
\end{proof}
\if0
\begin{remark}\label{Example-4-2}\ From Proposition \ref{spv2}, we explicitly obtain
\begin{align*}
\Phi _2^{\rm Rev} (1+\rho _n,0;1,\Lambda )&=-(\log 2\pi )\zeta (1+\rho _n)+\frac {\pi \od(\rho _n)} {\rho _n\sin (\pi \rho _n)},\\
\Phi _2^{\rm Rev} (2+\rho _n,-1;1,\Lambda )&=-(\log 2\pi )\zeta (1+\rho _n)+\frac {\pi \od(\rho _n)} {\rho_n(\rho _n+1)\sin (\pi \rho _n)}\\
&+12\zeta '(-1)\zeta (2+\rho _n).
\end{align*}
\end{remark}
\fi

\begin{remark}\label{Rem-5-5} 
We can also compute the regular values of $\Phi _2 (s_1,s_2;1,\Lambda )$ at $(s_1,s_2)=(u_1,u_2)$ on singular sets defined by 
\begin{align*}
& \Phi _2^{\rm Reg} (u_1,u_2;1,\Lambda )=\lim_{s_1\to u_1}\lim_{s_2\to u_2}\Phi _2 (s_1,s_2;1,\Lambda ).
\end{align*}
However it is almost trivial.  In fact, if we consider this limit 
in \eqref{brief} for $(u_1,u_2)=(-m,-n)$ and $(k+1+\rho_l,-k)$ $(m,n,k\in \mathbb{N}_0,\,l\in \mathbb{N})$, 
then the fifth and the sixth terms simply vanish, 
and the values of other terms can be computed just by substituting the limit values except for the cases which are not convergent as noted in Remark \ref{R-5-2}.
\end{remark}

\section{Reverse values of $\Phi _2 (s_1,s_2;1,\mu )$}\label{sec-6}

As stated in \eqref{mu-Dir}, we have
$$\frac{1}{\zeta(s)}=\sum _{n=1}^\infty \frac {\mu(n)}{n^s}\quad (\Re s>1).$$
Hence, when $\Phi(s;\alpha)=1$ in \eqref{def_double_tilde}, we see that $\widetilde{\alpha}=\mu$ in this case. Note that $\Phi(s;\alpha)=1$ has no pole. 

Similarly, it is also known that
\begin{align*}
& \frac{\zeta(s-1)}{\zeta(s)}=\sum_{n=1}^\infty \frac {\phi(n)}{n^s}\quad (\Re s>2),\\
& \frac{\zeta(2s)}{\zeta(s)}=\sum_{n=1}^\infty \frac {\lambda(n)}{n^s}\quad (\Re s>1),
\end{align*}
where $\phi$ is the Euler totient function and $\lambda$ is defined by $\lambda(n)=(-1)^r$, where $r$ is the number of prime factors with multiplicity 
(see \cite[$\S$ 1.2]{Titch}). Therefore we can also apply Theorem \ref{Th-4-2} to these cases.

Here we consider the case $\Phi(s;\alpha)=1$, namely $\widetilde{\alpha}=\mu$ and $c_k=c_k(\alpha)$ defined by \eqref{def-ck}. We can see that
\begin{equation}
c_k=\lim_{s\to -k}\ \frac{d}{ds}\frac{s+k}{\zeta(s)}=-\frac{\zeta''(-k)}{2(\zeta'(-k))^2}\quad (\text{$k$:even}). \label{def-ck-2}
\end{equation}

From \eqref{mainthm2}
it follows that
\begin{align}
&\Phi _2(s_1,s_2;1,\mu )\notag \\
&=-2\zeta (s_1+s_2)-\sum _{k=1\atop k {\rm :odd}}^{N-1}\binom {-s_2}k \frac {(k+1)\zeta (s_1+s_2+k)}{B_{k+1}}\nonumber \\
&+\sum _{k=1\atop k {\rm :even}}^{N-1}\bigg[\binom {-s_2}k\bigg\{ \frac {(-1)^{k/2}2(2\pi )^{k}}{\zeta(1+k)}\left(b_k \zeta(s_1+s_2+k)-\frac{\zeta'(s_1+s_2+k)}{k!}\right)\notag\\
& \quad  +c_k\zeta(s_1+s_2+k)\notag\bigg\} -\frac{\Gamma'(s_2+k)}{\Gamma(s_2)}\frac {(-1)^{k/2}2(2\pi )^{k}}{(k!)^2 \zeta (1+k)}\zeta (s_1+s_2+k)\bigg]\nonumber \\
&+\frac 1 {\Gamma (s_2)}\sum _{n=1}^\infty \Gamma (s_2-\rho _n)\Gamma (\rho _n)\frac {\zeta (s_1+s_2-\rho _n)}{\zeta '(\rho _n)}\nonumber \\
&+\frac 1 {2\pi i\Gamma (s_2)}\int _{(N-\varepsilon )}\Gamma (s_2+z)\Gamma (-z)\frac {\zeta (s_1+s_2+z)}{\zeta (-z)}dz\label{mainthm2-2}
\end{align}
for $N\in \mathbb N$, where $\varepsilon$ is a small positive number, $b_k$ and $c_k$ are defined by \eqref{def-bk} and \eqref{def-ck-2}. Therefore 
we can calculate the reverse value of $\Phi _2(s_1,s_2;1,\mu )$ at $(u_1,u_2)$ on singular sets determined by \eqref{coro-2}: 
\begin{align*}
& \Phi _2^{\rm Rev} (u_1,u_2;1,\mu )=\lim_{s_2\to u_2}\lim_{s_1\to u_1}\Phi _2 (s_1,s_2;1,\mu ),
\end{align*}
using the same method as in the proofs of Propositions \ref{spv1} and \ref{spv2}. As noted above, since $\Phi(s;\alpha)=1$ has no pole, \eqref{coro-2} implies
\begin{equation*}
\begin{split}
& s_2=-k\quad (k\in \mathbb{N},\ k\geq 2)\\
& s_1+s_2=1-k\quad (k\in \mathbb{N}_0),\\
& s_2=-l+\rho_n\quad (l\in \mathbb{N}_0,\ n\in \mathbb{N}),\\
& s_1+ s_2=1+\rho_n\quad (n\in \mathbb{N}).
\end{split}
\end{equation*}
For example, we can obtain the following from \eqref{mainthm2-2}. 

\begin{example}\label{Exam-4-3}\ Under Assumption \ref{Ass-2}, 
\begin{align*}
&\Phi _2^{\rm Rev}(0,0;1,\mu )=13,\\
&\Phi _2^{\rm Rev}(-1,0;1,\mu )=\frac 1 6 -\frac {2\pi ^2}{\zeta (3)}\left(\frac{3}{2}-\gamma\right)-\frac{\zeta''(-2)}{4\zeta'(-2)^2},\\
\if0
&\Phi _2^{\rm Rev}(0,0;1,\mu )=13,\\
&\Phi _2^{\rm Rev}(-1,0;1,\mu )=\frac 1 6 -\frac {4\pi ^2b_2}{\zeta (3)}+\frac {c_2} 2,\\
\fi
&\Phi _2^{\rm Rev}(1+\rho _n,0;1,\mu )=-2\zeta (1+\rho _n)-\frac {\pi } {\rho _n\zeta '(\rho _n)\sin (\pi \rho _n)}, \\
&\Phi _2^{\rm Rev}(2+\rho _n,-1;1,\mu )\\
& \quad =-2\zeta (1+\rho _n) -\frac {\pi } {\rho_n(\rho _n+1)\zeta '(\rho _n)\sin (\pi \rho _n)}-12\zeta (2+\rho _n).
\end{align*}
\end{example}

\section*{Acknowledgments.}
The authors are sincerely grateful to Professors Hirotaka Akatsuka, Hideaki Ishikawa and Masatoshi Suzuki for their useful advice.

\end{document}